\documentclass[12pt]{amsart}
\usepackage{amscd}
\usepackage{mathrsfs}
\usepackage{a4wide}
\usepackage{color}
\usepackage{amssymb,amsmath,amsthm,amsfonts,latexsym}
\usepackage[utf8x]{inputenc}
\usepackage{bbm} 

\usepackage{amssymb}

\usepackage[cmtip,all]{xy}

\newtheorem{theorem}{Theorem}[section]
\newtheorem{lemma}[theorem]{Lemma}
\newtheorem{conjecture}[theorem]{Conjecture}
\newtheorem{proposition}[theorem]{Proposition}

\newtheorem{observation}[theorem]{Observation}
\newtheorem{corollary}[theorem]{Corollary}
\newtheorem{question}[theorem]{Question}

\theoremstyle{definition}
\newtheorem{definition}[theorem]{Definition}

\theoremstyle{remark}

\numberwithin{equation}{section}

\newcommand{\Fra}{Fra\"\i ss\'e}
\newcommand{\Flim}{\operatorname{Flim}}
\newcommand{\Gur}{Gurarij}
\newcommand{\Aut}{\operatorname{Aut}}
\newcommand{\Age}{\operatorname{Age}}
\newcommand{\Cee}{{\mathcal C}}
\newcommand{\Gee}{{\mathcal G}}
\newcommand{\Kat}{{Kat\v etov}}
\newcommand{\Met}{{\mathcal M}}
\newcommand{\U}{{\mathbb U}}

\begin{document}

\title{A rigid Urysohn-like metric space}

\author{Jan Greb\'ik}
\address{Institute of Mathematics, Czech Academy of Sciences, 115 67 Prague, Czech Republic}
\curraddr{}
\email{}
\thanks{This work is part of the author's MSc thesis written under the supervision of Wies{\l}aw Kubi\'s.
Research supported by GA\v CR project 15-34700L and partially supported by
MOBILITY project 7AMB15AT035 (RVO:67985840). }

\subjclass[2010]{Primary 03C50, 
05C63. 
}

\date{}

\dedicatory{}

\commby{Mirna D\v zamonja}

\begin{abstract}
Recall that the \emph{Rado graph} is the unique countable graph that realizes all one-point extensions of its finite subgraphs. The Rado graph is well-known to be universal and homogeneous in the sense that every isomorphism between finite subgraphs of $R$ extends to an automorphism of $R$.

We construct a graph of the smallest uncountable cardinality  $\omega_1$ which has the same extension property as $R$, yet its group of automorphisms is trivial. We also present a similar, although technically more complicated, construction of a complete metric space of density $\omega_1$, having the extension property like the Urysohn space, yet again its group of isometries is trivial. This improves a recent result of Bielas.
\end{abstract}

\maketitle

\section{Introduction}

Recall that a structure $M$ is \emph{homogeneous} if every isomorphism between finitely generated substructures of $M$ extends to an automorphism of $M$.
A structure $M$ is \emph{$\omega$-saturated} if for every finitely generated structures $A \subseteq B$ every embedding of $A$ into $M$ extends to an embedding of $B$ into $M$. Of course, in this definition only structures from a fixed class $\Cee$ are considered. 
Finally, a structure $M$ is $\Cee$-\emph{universal} if every $X \in \Cee$ embeds into $M$.
A countably generated homogeneous $\Cee$-universal structure that also belongs to $\Cee$ is called the \emph{\Fra\ limit} of $\Cee$ (or, more precisely, of the class of finitely generated structures that are in $\Cee$). The key fact needed for the existence of a \Fra\ limit is the \emph{amalgamation property} saying that for every two embeddings $e_1 \colon A \to B_1$, $e_2 \colon A \to B_2$, where $A, B_1, B_2 \in \Cee$ are finitely generated, 
there exist embeddings $f_1 \colon B_1 \to C$, $f_2 \colon B_2 \to C$ with $C \in \Cee$, making the diagram
$$\xymatrix{
B_1 \ar[rr]^{f_1} & & C \\
A \ar[u]^{e_1} \ar[rr]_{e_2} & & B_2 \ar[u]_{f_2}
}$$
commuting.
Note that in case of relational languages (that is, languages with relation symbols only) finitely/countably generated structures are finite/countable.
One of the typical and well explored classes is the class $\Gee$ of countable graphs.
Its \Fra\ limit is the \emph{Rado graph}.

Another, formally not fitting into the above framework, is the class $\Met$ of complete separable metric spaces. Here, being finitely generated still means finite, yet a countably generated substructure is a separable closed subspace. All other concepts are the same as before.
The ``\Fra\ limit'' of $\Met$ is the \emph{Urysohn space} $\U$, constructed by Urysohn \cite{Ury} in his last work, published in 1927.
A rational version of $\Met$, denoted by $\mathbb Q\Met$, is the class of countable metric spaces with rational distances. This fits into the model theoretic setting and its \Fra\ limit is the \emph{rational Urysohn space} $\mathbb Q\U$, which is the unique countable $\omega$-saturated rational metric space. Its metric completion is the Urysohn space $\U$.

Our aim is to present two examples of $\omega$-saturated structures with trivial automorphism groups (such structures are called \emph{rigid}).
Namely, we construct an $\omega$-saturated rigid graph of cardinality $\omega_1$ and a rigid $\omega$-saturated complete metric space of density $\omega_1$. The second examples is an improvement of a recent result of Bielas~\cite{Bie}, who constructed an example with the same properties, however its density is large (strictly above the continuum).

In order to construct the announced examples, we prove the existence of an embedding $e \colon M \to M$, where $M$ is either the Rado graph or the Urysohn space $\U$, such that no non-trivial automorphism of $e[M]$ extends to $M$. In the case of graphs such a result has already been proved by Imrich, Klav\v zar, and Trofimov~\cite{Imr}.

Our results show that uncountable or non-separable $\omega$-saturated structures can have properties very far from being homogeneous.
This gives rise to a question whether there exist uncountable (or non-separable) $\omega$-saturated structures that are homogeneous with respect to its finitely generated substructures.
It turns out that the answer is affirmative as long as the class admits a so-called \emph{Kat\v etov functor}.
In that case it is not hard to see that for each uncountable cardinal $\kappa$ there exists a homogeneous $\omega$-saturated structure of size $\kappa$.
We sketch the arguments in the next section. For precise definitions and results on Kat\v etov functors we refer to~\cite{KubMas}.

\section{Kat\v etov functors and $\omega$-saturated structures}

Let $\Cee$ be a fixed class of countably generated structures, where in case of metric spaces ``countably generated'' means ``closed separable''.
We denote by $\Flim(\Cee)$ the \Fra\ limit of $\Cee$, namely, the unique countably generated (complete separable, in case of metric spaces) structure $L$ that is homogeneous and $\Cee$-universal.
It is well-known that $\Flim(\Cee)$ exists if and only if $\Cee$ has the joint embedding property (every two finitely generated structures are isomorphic to substructures of some $C \in \Cee$), the amalgamation property, and contains countably many isomorphic types of finitely generated structures.
In the case of metric spaces the last condition is not satisfied, although the Urysohn space still shares all the properties of model-theoretic \Fra\ limits.
For general theory of \Fra\ limits we refer to \cite{Hod}, for category-theoretic generalizations see \cite{Kub}.

Recall that the \emph{age} of a structure $X$, denoted by $\Age(X)$, is the class of all finitely generated structures isomorphic to substructures of $X$. Clearly, $\Cee = \Age(\Flim(\Cee))$, as long as $\Flim(\Cee)$ exists.

\begin{definition}
We say that structure (not necessarily countable) X is \emph{\Fra-like} for $\mathcal{C}$ if $\Age(X)=\mathcal{C}$ and it is $\omega$-saturated.
In the particular classes of countable graphs and complete separable metric spaces, we shall say \emph{Rado-like} and \emph{Urysohn-like} instead of \Fra-like.
\end{definition}

From now on we will be mostly interested in \Fra-like structures of cardinality $\omega_1$. It can be proved that every such structure is the colimit of a continuous transfinite chain of length $\omega_1$ of the following form:
$$\Flim(\mathcal{C})\hookrightarrow \Flim(\mathcal{C}) \hookrightarrow \dots \hookrightarrow\Flim(\mathcal{C}) \hookrightarrow \dots.$$
The embeddings in this sequence can be of course completely arbitrary; in typical cases  there are continuum many possibilities for an embedding $\Flim(\Cee) \hookrightarrow \Flim(\Cee)$.
Continuity of the chain simply means that the structures at limit steps are colimits of the smaller ones. Note that the colimit of a chain of first-order structures is simply its union, while the colimit of a chain of complete metric spaces (with isometric embeddings) is the completion of its union.

Assume that we have such structure $X$, we say that it is \emph{given by} a sequence $(\Flim(\mathcal{C}),e_i)_{i<\omega_1}$, where $e_i$ are embeddings as above (more precisely, $e_i$ is the embedding of $i$th copy of $\Flim(\Cee)$ into $(i+1)$st copy of $\Flim(\Cee)$). We will use the obvious notation $e_i^j$, denoting the embedding of $i$th structure of the chain into the $j$th structure. It is straightforward to see that for every automorphism $\alpha$ of $X$ there is a closed and unbounded set of indices $C \subseteq \omega_1$ such that $\alpha$ is invariant on $\Flim(\Cee)_\alpha$ for every $\alpha \in C$, where $\Flim(\Cee)_\alpha$ denotes the $\alpha$th copy of $\Flim(\Cee)$ in the chain.

\begin{definition}
To every $e:\Flim(\mathcal{C})\to \Flim(\mathcal{C})$ we assign $G_e\le \Aut(\Flim(\mathcal{C}))$ such that $\alpha\in \Aut(\Flim(\mathcal{C}))$ is in $G_e$ iff there is $\beta\in \Aut(\Flim(\mathcal{C}))$ such that the following diagram commutes

$$\xymatrix{
\Flim(\mathcal{C}) \ar[rr]^{\beta} & & \Flim(\mathcal{C}) \\
\Flim(\mathcal{C}) \ar[u]^{e} \ar[rr]_{\alpha} & & \Flim(\mathcal{C}) \ar[u]_{e}
}.$$
\end{definition}

We say that such $\beta\in \Aut(\Flim(\mathcal{C}))$ is \emph{invariant} over $e$ and $\alpha$ can be extended \emph{via} $e$.  We can define a subgroup $H_e\le \Aut(\Flim(\mathcal{C}))$ which consist of those elements which are invariant over $e$. There is a natural homomorphism $h:H_e\to G_e$ which is onto. To every \Fra-like structure $X$ given by a sequence $(\Flim(\mathcal{C}),e_i)_{i<\omega_1}$ we assign a tree $T_X$. Its elements are automorphisms of $\Flim(\mathcal{C})$ for all $i<\omega_1$ and the ordering is given by the relation of being invariant and can be extended i.e. $\alpha\ge \beta$ iff $\alpha\in \Aut(\Flim(\mathcal{C}))$, $\beta\in \Aut(\Flim(\mathcal{C}))$ and $\beta$ is an extension of $\alpha$ given by some $e_i^j$. In fact, $X$ has a non-trivial automorphism iff $T_X$ has a cofinal branch, different from the branch of identities.

A general approach by using so-called \emph{Kat\v etov functors} (see \cite{KubMas}) gives a sufficiant condition for the existence of homogeneus \Fra-like structure $X$. For example, graphs and metric spaces admit a Kat\v etov functor.
More generally, $\mathcal{C}$ has a Kat\v etov functor whenever it has push-outs in the category of homomorphisms, see \cite{KubMas}.

Roughly speaking, a Kat\v etov functor assigns to each structure $X$ a bigger structure $K(X) \supseteq X$ realizing all one-point extensions of finitely generated substructers of $X$. Furthermore, $K$ is a functor, which means that it assigns to each embedding its extension, and this assignment preserves identities and compositions.

\begin{proposition}\label{PropOne}
If there is a Kat\v etov functor then there is a non-trivial embedding $e: \Flim(\mathcal{C})\to \Flim(\mathcal{C})$ such that $G_e=\Aut(\Flim(\mathcal{C}))$.
\end{proposition}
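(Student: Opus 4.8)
The goal: use a Katětov functor $K$ to build an embedding $e:\Flim(\mathcal C)\to\Flim(\mathcal C)$ that is "non-trivial" (i.e. not surjective — indeed not an isomorphism) yet such that *every* automorphism $\alpha$ of $\Flim(\mathcal C)$ extends via $e$, i.e. $G_e=\Aut(\Flim(\mathcal C))$.

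The plan is to iterate the Katětov functor $\omega$ times and take a colimit. Start with $L=\Flim(\mathcal C)$ and set $L_0=L$, $L_{n+1}=K(L_n)$, with the canonical inclusions $\iota_n:L_n\hookrightarrow L_{n+1}$ given by the functor. Let $L_\omega=\operatorname{colim}_n L_n$ (a union in the relational case, a completion of the union for metric spaces). Since each $L_n$ realizes all one-point extensions of its finite substructures, a back-and-forth / one-point extension argument shows $L_\omega$ is again $\omega$-saturated with $\Age(L_\omega)=\mathcal C$; being of the right cardinality/density, $L_\omega\cong\Flim(\mathcal C)$ by uniqueness of the \Fra\ limit. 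Fix such an isomorphism $u:L_\omega\to\Flim(\mathcal C)$ and define $e:=u\circ\iota_0^\omega:\Flim(\mathcal C)=L_0\hookrightarrow L_\omega\to\Flim(\mathcal C)$, where $\iota_0^\omega$ is the composite of all the inclusions. This $e$ is non-trivial: $\iota_0:L_0\to L_1=K(L_0)$ is a proper inclusion (the Katětov functor strictly enlarges the structure, as it must realize at least one new one-point extension), so $e$ is not surjective, hence not an automorphism.

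The key step is to check $G_e=\Aut(\Flim(\mathcal C))$, and this is where functoriality does all the work. Given $\alpha\in\Aut(L_0)=\Aut(\Flim(\mathcal C))$, functoriality of $K$ gives automorphisms $K^n(\alpha)\in\Aut(L_n)$ with $K^{n+1}(\alpha)\circ\iota_n=\iota_n\circ K^n(\alpha)$ for every $n$ (this is exactly the naturality square for the embedding $\alpha$, using that $K$ preserves identities and compositions so that $K^n(\alpha)$ is indeed an automorphism with inverse $K^n(\alpha^{-1})$). These are compatible with the colimit cone, so they induce an automorphism $\alpha_\omega\in\Aut(L_\omega)$ commuting with all $\iota_n$; transporting along $u$, set $\beta:=u\circ\alpha_\omega\circ u^{-1}\in\Aut(\Flim(\mathcal C))$. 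Then $\beta\circ e = u\circ\alpha_\omega\circ\iota_0^\omega = u\circ\iota_0^\omega\circ\alpha = e\circ\alpha$, which is precisely the commuting square in the definition of $G_e$. Since $\alpha$ was arbitrary, $G_e=\Aut(\Flim(\mathcal C))$.

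I expect the main obstacle to be not the functoriality bookkeeping (which is essentially formal) but verifying that the colimit $L_\omega$ is genuinely $\omega$-saturated and isomorphic to $\Flim(\mathcal C)$ — in particular in the metric case, where $L_\omega$ is the completion of $\bigcup_n L_n$ and one must check that completion neither destroys $\omega$-saturation nor changes the age, and that the inclusion $L_0\hookrightarrow L_\omega$ survives as an isometric embedding into the completion. This is standard (it is how the Urysohn space is built from $\mathbb Q\U$), but it is the only place where one uses more than abstract nonsense. A minor secondary point is to confirm that $\iota_0$ is strict — if for some degenerate $\mathcal C$ the Katětov functor were the identity one would need $\mathcal C$ to contain a proper one-point extension somewhere, which holds in all the cases of interest (graphs, metric spaces), so non-triviality is safe.
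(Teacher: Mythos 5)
Your argument is correct and is exactly the standard one: iterate the Kat\v etov functor $\omega$ times, identify the colimit with $\Flim(\Cee)$ by uniqueness, and use functoriality/naturality to push every automorphism of $L_0$ up the tower. The paper itself states this proposition without proof, deferring to \cite{KubMas}, and the argument intended there is the same as yours, including the two points you rightly flag as the only non-formal ones (that the colimit is again the \Fra\ limit, and that the canonical embedding is proper in the classes of interest).
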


\begin{theorem}
If there is a Kat\v etov functor then there is $X=(\Flim(\mathcal{A}),e_i)_{i<\omega_1}$ that is homogenous.
\end{theorem}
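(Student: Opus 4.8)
The plan is to keep the chain as simple as possible and let Proposition~\ref{PropOne} carry the whole argument. Fix, by Proposition~\ref{PropOne}, a non-trivial embedding $e\colon\Flim(\Cee)\to\Flim(\Cee)$ with $G_e=\Aut(\Flim(\Cee))$ (which we may and do take to be non-surjective --- this is what makes the construction grow), and build $X$ from the continuous transfinite chain of length $\omega_1$ in which \emph{every} bonding embedding is a copy of $e$: put $e_i:=e$ for all $i<\omega_1$, where at a limit ordinal $\lambda$ one first identifies the colimit $\varinjlim_{j<\lambda}\Flim(\Cee)_j$ with $\Flim(\Cee)$ and then lets $e_\lambda$ be the corresponding copy of $e$. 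Two routine facts make this legitimate. First, the colimit of a chain of copies of $\Flim(\Cee)$ of countable length under \emph{any} bonding embeddings is again $\omega$-saturated with age $\Cee$, hence a copy of $\Flim(\Cee)$: if $A\subseteq B$ lie in $\Cee$ and $f\colon A\to\varinjlim_{j<\lambda}\Flim(\Cee)_j$ is an embedding, then the finitely generated image $f[A]$ already sits inside some $\Flim(\Cee)_j$, where $f$ extends to $B$ by $\omega$-saturation of that copy. (In the metric case ``$f[A]$ sits inside some $\Flim(\Cee)_j$'' is replaced by an $\varepsilon$-approximation argument together with completeness --- the standard upgrade from an approximate extension property to the genuine one for complete spaces --- and the colimit is the completion of the union.) Second, since $e$ is non-surjective the chain never stabilises, so $X=\bigcup_{i<\omega_1}\Flim(\Cee)_i$ has cardinality exactly $\omega_1$. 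Hence $X$ is $\Flim$-like of cardinality $\omega_1$ and of the form prescribed by the theorem, and the Kat\v etov functor enters only through Proposition~\ref{PropOne}.

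It then remains to verify that this particular $X$ is homogeneous. Let $p\colon A\to B$ be an isomorphism between finitely generated substructures of $X$. By regularity of $\omega_1$ and continuity of the chain there is $i<\omega_1$ with $A,B\subseteq\Flim(\Cee)_i$ (in the metric case one uses that the $\omega_1$-union of the $\Flim(\Cee)_j$ is already complete, since a Cauchy sequence in it is countable and therefore lies in some $\Flim(\Cee)_\lambda$ with $\lambda<\omega_1$). By homogeneity of $\Flim(\Cee)_i$ the map $p$ extends to an automorphism $\alpha_i$ of $\Flim(\Cee)_i$. I would then construct automorphisms $\alpha_j$ of $\Flim(\Cee)_j$ for all $j$ with $i\le j<\omega_1$ by transfinite recursion: at a successor step, given $\alpha_j$, use that $G_{e_j}=\Aut(\Flim(\Cee)_j)$ to pick $\alpha_{j+1}\in\Aut(\Flim(\Cee)_{j+1})$ with $e_j\circ\alpha_j=\alpha_{j+1}\circ e_j$ (the commuting square from the definition of $G_e$); at a limit step $\lambda$ let $\alpha_\lambda$ be the colimit of $(\alpha_j)_{i\le j<\lambda}$. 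A straightforward induction shows this family stays coherent, i.e.\ $e_j^{j'}\circ\alpha_j=\alpha_{j'}\circ e_j^{j'}$ for $i\le j\le j'<\lambda$; since the $\alpha_j$ are invertible with inverses forming a coherent family as well, the colimit is a genuine automorphism of $\varinjlim_{j<\lambda}\Flim(\Cee)_j=\Flim(\Cee)_\lambda$, so the recursion goes through. Finally the colimit $\alpha$ of the coherent family $(\alpha_j)_{i\le j<\omega_1}$ is an automorphism of $\varinjlim_{i\le j<\omega_1}\Flim(\Cee)_j=X$, and since its restriction to $\Flim(\Cee)_i$ is $\alpha_i$, it extends $p$. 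Thus every partial isomorphism extends to an automorphism of $X$.

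In the language of the tree $T_X$ introduced above this is simply the statement that, because $G_{e_j}=\Aut(\Flim(\Cee)_j)$ at every level, each node of $T_X$ has a successor on the next level and colimits of coherent chains are again nodes, so every node lies on a cofinal branch; the branch through a node $\alpha_i$ extending $p$ yields the desired automorphism of $X$. I expect the genuine content of the theorem to lie entirely in Proposition~\ref{PropOne}: once one has an embedding $e$ whose associated group $G_e$ is the full automorphism group, the present statement is essentially a corollary. The only points asking for some care are the coherence of the lifted automorphisms at limit stages --- handled above by a recursion that maintains coherence automatically --- and, in the metric case, the verification that the limit-stage colimits are again copies of $\U$, via the approximate-extension-plus-completeness argument mentioned above; neither of these is the heart of the matter.
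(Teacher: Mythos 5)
Your proposal is correct and is essentially the paper's own argument: the paper's entire proof is the one line ``Take $e_i:=e$ from the previous proposition,'' and you have simply spelled out the routine verification (lifting a partial isomorphism through the chain via $G_{e_j}=\Aut(\Flim(\Cee))$ and handling limit stages) that the paper leaves implicit.
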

\begin{proof}
Take $e_i:=e$ from Proposition~\ref{PropOne}.
\end{proof}

We prove in the next sections that for graphs and metric spaces the opposite extreme possibility can hold as well.

\section{The Rado graph}
Let $\mathcal{G}$ be the \Fra\ class of finite graphs with embeddings. \Fra\  limit of $\mathcal{G}$ is called the Rado graph and we denote it by $\mathcal{R}$. Let just recall its well known characterisation.

\begin{quote}
For every $X,Y$ disjoint subsets of $\mathcal{R}$ there is an element $x\in \mathcal{R}$ which is connected with an edge to all elements in $X$ and not connected to all elements in $Y$.
\end{quote}

By general theory every countable graph having this property is isomorphic to $\mathcal{R}$. For better reading we call each \Fra\ -like graph Rado-like. Just for the sake of completeness recall the definition.

\begin{definition}
Let $X$ be a graph on $\omega_1$ vertices. Then it is Rado-like iff it is the colimit of a chain of the form $(\mathcal{R},e_i)_{i<\omega_1}$.
\end{definition}

As we have mentioned above, there is a Kat\v etov functor for graphs, therefore we have:

\begin{corollary}
There exists a homogeneous Rado-like graph.
\end{corollary}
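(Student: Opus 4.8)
The plan is to invoke Theorem 1 of the previous section directly. We have already established in this excerpt that graphs admit a Kat\v etov functor (mentioned just above the statement of the corollary), and Theorem 1 asserts that whenever there is a Kat\v etov functor there exists a homogeneous \Fra-like structure $X = (\Flim(\Cee), e_i)_{i<\omega_1}$. Specialising $\Cee$ to the \Fra\ class $\mathcal{A}$ of finite graphs, we have $\Flim(\mathcal{A}) = \mathcal{R}$, so the resulting $X = (\mathcal{R}, e_i)_{i<\omega_1}$ is precisely a homogeneous Rado-like graph in the sense of the definition given above. Thus the corollary is an immediate instantiation.

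The only genuine content I would spell out, for the reader's convenience, is the chain of implications: the existence of a Kat\v etov functor for graphs (via the fact that the category of graphs with homomorphisms has push-outs, cited from \cite{KubMas}) feeds into Proposition~\ref{PropOne} to produce a single non-trivial embedding $e \colon \mathcal{R} \to \mathcal{R}$ with $G_e = \Aut(\mathcal{R})$, and then Theorem~1 sets every $e_i := e$ to obtain the homogeneous example. I would simply write: ``Apply Theorem~1 with $\Cee = \mathcal{A}$; since $\Flim(\mathcal{A}) = \mathcal{R}$, the resulting structure is a homogeneous Rado-like graph.''

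Since everything needed has already been proved earlier in the excerpt, there is no real obstacle here — the corollary is a one-line deduction. If I wanted to be more self-contained I might recall why $G_e = \Aut(\mathcal{R})$ for all $i$ guarantees that the tree $T_X$ has many cofinal branches and hence $X$ is homogeneous, but strictly speaking that reasoning is already packaged inside Theorem~1, so repeating it would be redundant. The main thing to be careful about is just making sure the specialisation $\Flim(\mathcal{A}) = \mathcal{R}$ is stated explicitly, since that is the bridge between the general functorial statement and the concrete graph-theoretic corollary.
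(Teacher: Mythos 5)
Your proposal matches the paper exactly: the corollary is deduced by specialising the Theorem of Section~2 (via the existence of a Kat\v etov functor for graphs and Proposition~\ref{PropOne}, taking every $e_i := e$) to the class of finite graphs, where the \Fra\ limit is $\mathcal{R}$. The paper gives no further argument, so your one-line instantiation is the intended proof.
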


Now we turn our attention to the opposite of Proposition \ref{PropOne} from the previous section.
The following result was proved by Imrich, Klav\v zar, and Trofimov. We present a slightly different proof, as similar techniques will be used later.

\begin{theorem}[\cite{Imr}]
There is an embedding $e:\mathcal{R}\to \mathcal{R}$ such that $G_e=\{id\}$.
\end{theorem}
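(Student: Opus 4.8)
The plan is to build the embedding $e \colon \mathcal{R} \to \mathcal{R}$ by a back-and-forth-style recursion that, step by step, destroys every potential non-identity automorphism of $e[\mathcal{R}]$ that could extend to $\mathcal{R}$. Recall that an automorphism $\alpha$ of $e[\mathcal{R}]$ extends to $\mathcal{R}$ exactly when, writing $\alpha' = e^{-1}\alpha e \in \Aut(\mathcal{R})$, there is $\beta \in \Aut(\mathcal{R})$ with $\beta e = e \alpha'$; so $G_e = \{\mathrm{id}\}$ amounts to saying: for every non-identity $\alpha' \in \Aut(\mathcal{R})$ there is \emph{no} such $\beta$. The key idea is to code into the image $e[\mathcal{R}]$ enough ``rigid scaffolding'' — using the neighborhoods of the vertices relative to $e[\mathcal{R}]$ as seen from inside $\mathcal{R}$ — so that any $\beta$ fixing $e[\mathcal{R}]$ setwise and acting nontrivially on it is forced to move a vertex of $e[\mathcal{R}]$ to something with the wrong $\mathcal{R}$-type, a contradiction.

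Concretely, I would enumerate $\mathcal{R} = \{r_n : n < \omega\}$ and build $e$ as the increasing union of finite partial embeddings $e_n$ defined on larger and larger finite subgraphs $A_n \subseteq \mathcal{R}$, while simultaneously choosing, outside $e[\mathcal{R}]$, ``tag'' vertices of $\mathcal{R}$ that pin down each element of $e[\mathcal{R}]$. The mechanism: to each vertex $v = e(r)$ attach an external gadget $G_v \subseteq \mathcal{R} \setminus e[\mathcal{R}]$ — say a vertex $t_v$ adjacent (within $\mathcal{R}$) exactly to $v$ among all of $e[\mathcal{R}]$, plus a small asymmetric configuration distinguishing $t_v$ from all other $t_{v'}$ — so that the map $v \mapsto (\text{the unique external vertex seeing } v \text{ and nothing else in } e[\mathcal{R}])$ is injective and its values are pairwise $\mathcal{R}$-inequivalent. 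Then any $\beta \in \Aut(\mathcal{R})$ with $\beta(e[\mathcal{R}]) = e[\mathcal{R}]$ must permute these external vertices consistently with how it permutes $e[\mathcal{R}]$, but since the $t_v$ are made mutually distinguishable inside $\mathcal{R}$, $\beta$ is forced to fix each $t_v$, hence each $v$, hence $\beta = \mathrm{id}$ on $e[\mathcal{R}]$ and so $\alpha' = \mathrm{id}$. The recursion must of course also guarantee that $e[\mathcal{R}] \cong \mathcal{R}$ (realize all one-point extension types over finite subsets of $e[\mathcal{R}]$, using the extension property of $\mathcal{R}$) and that the ambient graph really is $\mathcal{R}$ (a second, interleaved bookkeeping realizing all one-point extension types over finite subsets of $\mathcal{R}$) — both are routine given the extension property.

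The genuinely delicate point — and where I expect the main obstacle — is arranging the external tags so that they are \emph{simultaneously}: (i) definable from $e[\mathcal{R}]$ in an automorphism-invariant way (so $\beta$ is forced to respect the tagging), (ii) mutually rigid, i.e. no nontrivial automorphism of $\mathcal{R}$ permutes them even abstractly, and (iii) still compatible with $e[\mathcal{R}]$ being all of $\mathcal{R}$ up to isomorphism and with the ambient graph being $\mathcal{R}$. The tension is that $\mathcal{R}$ is enormously homogeneous, so any finite gadget is moved by some automorphism; rigidity has to be an $\omega_1$-type, $\omega$-step limit phenomenon — one builds, along $\omega$, an ever-refining system of ``distinguishing features'' (e.g. coding a fixed rigid relation, or a scale of connection patterns to previously built tags) so that in the limit no vertex of the scaffolding can be moved. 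Carefully interleaving the three bookkeeping tasks so none starves the others is the technical heart; the argument that $\beta = \mathrm{id}$ then reads off from the invariant definability of the scaffolding plus its limiting rigidity. I would present this via a single recursion with a priority list handling, at stage $n$: extend $e$, realize the $n$th extension type over $e[\mathcal{R}]$, realize the $n$th extension type over $\mathcal{R}$, and add the $n$th layer of tags.
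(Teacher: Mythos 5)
You have the right architecture, and it is essentially the one the paper uses: embed $\mathcal{R}$ into a larger copy of $\mathcal{R}$, place a rigid scaffolding outside the image whose vertices ``tag'' the vertices of the image, and interleave bookkeeping to keep the ambient graph Rado. But the proposal stops exactly at the point where the theorem is actually proved: you flag the construction of a scaffolding that is simultaneously setwise invariant under every candidate $\beta$ and rigid as ``the main obstacle'' and offer only a gesture (``an ever-refining system of distinguishing features'') toward overcoming it. Moreover, the mechanism you propose for invariance --- that the tag set be \emph{definable} from $e[\mathcal{R}]$ in an automorphism-invariant way --- is the wrong tool: your per-vertex tag ``the unique external vertex adjacent to exactly $v$ among all of $e[\mathcal{R}]$'' is an infinitary condition that is hard to reconcile with the ambient graph being Rado (the extension property keeps forcing further external vertices with prescribed finite adjacency patterns, and you would have to control their adjacencies to the whole infinite set $e[\mathcal{R}]$), and nothing in the sketch makes your chosen tags the unique vertices satisfying it.

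The paper resolves both difficulties at once by a simple device. First, the scaffolding is the \emph{entire} complement $V_1=\{v_1,v_2,\dots\}$ of the image: any $\beta$ with $\beta\circ e=e\circ\alpha$ maps $e[\mathcal{R}]$ onto itself, hence maps $V_1$ onto itself, so no definability is needed. Second, rigidity of $V_1$ is obtained by making the induced graph on $V_1$ connected with pairwise distinct internal degrees $d_{V_1}(v_i)=a_i$ for a fixed strictly increasing sequence $a_1<a_2<\cdots$; the only automorphism of such a graph is the identity, so $\beta$ fixes $V_1$ pointwise. Third, the tags are indexed by \emph{ordered pairs} rather than by single vertices: for the $k$th ordered pair $(x,y)$ of distinct image vertices, $v_k$ is made adjacent to $x$ and non-adjacent to $y$. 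This constrains only two of $v_k$'s adjacencies into $e[\mathcal{R}]$, leaving the rest free for the Rado bookkeeping, and if $\alpha(x)=y\neq x$ then $\beta$ would have to carry the edge $\{x,v_k\}$ to the non-edge $\{y,v_k\}$, a contradiction. Replacing your definable per-vertex tags by this scheme (scaffolding equal to the whole complement, rigid via a degree sequence, one tag per ordered pair) turns your outline into a proof.
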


\begin{proof}
Denote $\mathcal{R}=(V,E)$. Let $A_1\subseteq \mathbb{N}$ be infinite and fix the unique increasing enumeration on $A_1$ i.e. $A_1=\{a_1<a_2<...\}$. Fix an enumeration of all distinct ordered pairs of vertices from $\mathcal{R}$ and denote it by $\{u_1,u_2,...\}$. We add to $\mathcal{R}$ countably many vertices $V_1:=\{v_1,v_2,...\}$ and some edges such that it will be again isomorphic to $\mathcal{R}$. 

We want to add edges between vertices from $V_1$ in such a way that $d_{V_1}(v_i)=a_i$ ($d_{V_1}$ denotes the degree with respect to $V_1$) and there is a path $[v_1,v_2,...]$. This is always possible because the set $A_1$ is infinite. We proceed by induction. In the $n$-th step we already have that $d(v_n)=k$ for some $k<a_n$ so we add edges $\{a_n,a_m\}_{n<m\le n+a_n-k}$. 

Fix an enumeration $\{W_1,W_2,...\}$ of all disjoint ordered pairs of finite subsets of $V\cup V_1$ such that the following holds
\begin{itemize}
\item for every $j\in\mathbb{N}$ there is $i=0,1$ such that $W^i_j\cap V_1\not=\emptyset$, 
\item for every $n\in\mathbb{N}$ the following holds $(W^1_n\cup W^2_n)\cap \{v_n,v_{n+1},...\}=\emptyset.$
\end{itemize}
Such enumeration always exists. The construction will be as follows. In the $n$-th step we add $\{v_n,u^1_n\}$ as edge and we do not add $\{v_n,u^2_n\}$. We take care of the pair $W_n$ in such a way that $w_n$ is a witness from the original $\mathcal{R}$ for a vertex which is connected to all $W^1_n\cap V$, not to any of $W^2_n\cap V$ and it has not been used yet in any previous step. Finally we add these edges $\{\{w_n,v\}:v\in V_1\cap W^1_n\}$.

To complete the proof we have to show that we obtain a Rado graph and that it has the required properties. We can easily check that the Rado property is satisfied due to the construction. Assume now that we have an automorphism $\alpha\in G_e$. For every $i\not= j$ we have $d_{V_1}(v_i)\not=d_{V_1}(v_j)$ which implies that the induced graph on $V_1$ is rigid. That means that every extension has to be identity on $V_1$. Assume now that $\alpha$ moves at least one vertex i.e. $\alpha(x)=y$ and $x\not=y$. But this pair has a number, say $k$, and this means that $\{x,v_k\}$ is an edge which forces $\{\alpha(x)=y,\bar{\alpha}(v_k)=v_k\}$ to be an edge too, but $\{v_k,y\}$ is not an edge. This is a contradiction.
\end{proof}

\begin{theorem}
There exists a Rado-like graph that is rigid.
\end{theorem}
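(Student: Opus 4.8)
The plan is to iterate the embedding $e \colon \mathcal{R} \to \mathcal{R}$ from the previous theorem $\omega_1$ times, forming the continuous chain $(\mathcal{R}, e_i)_{i<\omega_1}$ with every $e_i := e$, and to take $X$ to be its colimit (i.e.\ its union). By definition $X$ is Rado-like, so the only thing to verify is that $X$ is rigid. First I would recall the structural fact stated in Section~2: for any automorphism $\alpha$ of $X$ there is a club $S \subseteq \omega_1$ such that $\alpha$ restricts to an automorphism $\alpha_i$ of the $i$th copy $\mathcal{R}_i$ for every $i \in S$. Moreover, for $i < j$ both in $S$, the diagram relating $\alpha_i$, $\alpha_j$ and the connecting embedding $e_i^j$ commutes, so $\alpha_i \in G_{e_i^j}$, and hence in particular $\alpha_i \in G_{e_i^{i+1}} = G_e$.

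Now the previous theorem gives $G_e = \{\mathrm{id}\}$, so $\alpha_i = \mathrm{id}$ on $\mathcal{R}_i$ for every $i \in S$. Since $S$ is cofinal in $\omega_1$ and the copies $\mathcal{R}_i$ for $i \in S$ exhaust $X$ (every vertex of $X$ lies in some $\mathcal{R}_i$ with $i \in S$, because $S$ is club and the chain is continuous), it follows that $\alpha = \mathrm{id}$ on all of $X$. Thus $\Aut(X) = \{\mathrm{id}\}$, i.e.\ $X$ is rigid. Phrased in the tree language of Section~2: the tree $T_X$ built from this chain has no cofinal branch other than the branch of identities, precisely because at each successor step the only automorphism that can be extended via $e$ is the identity, which is exactly the content of $G_e = \{\mathrm{id}\}$.

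I expect no serious obstacle here: once the iteration-and-club argument is set up, rigidity is essentially immediate from $G_e = \{\mathrm{id}\}$. The one point that deserves care is the reduction from $\alpha \in G_{e_i^j}$ for arbitrary large $j \in S$ down to $\alpha_i \in G_e$; this uses the functoriality-style observation that $e_i^j = e_{j-1} \circ \cdots \circ e_i$ together with the fact that an automorphism invariant over a composite of embeddings is, in particular, handled correctly at the first step — formally, if $\beta_j \in \Aut(\mathcal{R}_j)$ witnesses $\alpha_i \in G_{e_i^j}$, one restricts along the club to recover $\beta_{i+1} \in \Aut(\mathcal{R}_{i+1})$ witnessing $\alpha_i \in G_{e_i^{i+1}} = G_e$. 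Alternatively, and more cleanly, one simply applies the club argument to consecutive elements of $S$ and notes that $e$ restricted between consecutive copies is exactly $e$ (up to the canonical identification), so $\alpha_i \in G_e$ directly. Either way the argument is short, and the real work was already done in establishing $G_e = \{\mathrm{id}\}$.
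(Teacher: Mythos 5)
Your overall strategy --- iterate a single rigidifying embedding $e$ with $G_e=\{\mathrm{id}\}$ and finish with a club argument --- is not what the paper does, and it contains a genuine gap at exactly the point you flag as "the one point that deserves care." The club argument gives you, for $i<j$ both in $S$, that $\alpha_i\in G_{e_i^j}$ where $e_i^j$ is the \emph{composite} embedding $\mathcal{R}_i\to\mathcal{R}_j$. To conclude $\alpha_i=\mathrm{id}$ you need $G_{e_i^j}=\{\mathrm{id}\}$, and this does not follow from $G_e=\{\mathrm{id}\}$. Neither of your two proposed reductions works. First, the witness $\beta_j$ is the restriction of $\alpha$ to $\mathcal{R}_j$; since $i+1$ need not lie in the club $S$, there is no reason $\alpha$ maps $\mathcal{R}_{i+1}$ onto itself, so you cannot "restrict" $\beta_j$ to get a witness $\beta_{i+1}\in\Aut(\mathcal{R}_{i+1})$. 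Second, consecutive elements $i<j$ of $S$ are generally not consecutive ordinals, so $e_i^j$ is a (possibly transfinite) composite of copies of $e$, not $e$ itself, and in general $G_{f\circ e}\not\subseteq G_e$: an automorphism can extend over a composite without extending over each factor. Concretely, if you iterate the \emph{same} $e$, every layer of new vertices carries the same internal degree set $A_1$, so an automorphism of a later stage could in principle permute or mix the layers; nothing in the one-step theorem rules this out.

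The paper's proof is precisely designed to close this gap: it does not iterate a fixed $e$ but builds the chain by hand, using an almost disjoint family $\{A_i\}_{i<\omega_1}$ so that distinct layers $V_{i+1}\setminus V_i$ have distinguishable degree sequences, and imposing the condition that vertices added at stage $i$ have no edges to vertices added at earlier stages $j$ with $0<j<i$ (all coding edges go back to $\mathcal{R}_0$). These two features let one show directly that $G_{e_{j,i}}=\{\mathrm{id}\}$ for \emph{all} pairs $j<i<\omega_1$, which is the statement your argument actually needs. To repair your proposal you would either have to prove $G_{e_i^j}=\{\mathrm{id}\}$ for all composites of your fixed $e$ (which is a nontrivial additional argument, not a formal consequence of $G_e=\{\mathrm{id}\}$), or switch to a stage-dependent construction as in the paper.
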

\begin{proof}
We use induction to build a sequence of graphs $\{\mathcal{R}_i\}_{i<\omega_1}$. We will denote vertices of $\mathcal{R}_i$ by $V_i$ and edges by $E_i$. Fix an almost disjoint family $\{A_i\}_{i<\omega_1}$ and fix the increasing ordering on each $A_i$ i.e. $A_i=\{a_{i,0}<a_{i,1}<...\}$. Let $\mathcal{R}_0:=\mathcal{R}$. Assume that we have constructed $\{\mathcal{R}_i\}_{i<\alpha}$ with the following properties:
\begin{itemize}
\item $\mathcal{R}_{\beta}=\bigcup_{i<\beta}\mathcal{R}_i$ for $\beta $ limit,
\item $\mathcal{R}_i\subseteq \mathcal{R}_{i+1}$ and $|V_{i+1}\setminus V_i|=\omega$ for all $i<\alpha$,
\item $\mathcal{R}_i\simeq \mathcal{R}$ for all $i<\alpha$,
\item if $v\in V_j$ and $w\in V_i$ where $0<j<i<\alpha$ then $\{v,w\}\not\in E_i$,
\item for every pair $\{v,w\}\in [\mathcal{R}_0]^2$ and every $i<\alpha$ there is $u\in V_{i+1}\setminus V_i$ such that $\{u,v\}\in E_{i+1}$ and $\{u,w\}\not\in E_{i+1}$. 
\end{itemize}
If $\alpha$ is limit then put simply $\mathcal{R}_{\alpha}=\bigcup_{i<\alpha}\mathcal{R}_{\alpha}$. If $\alpha$ is a succesor then $V_{\alpha}:=V_{\alpha-1}\cup \{v_{\alpha,0},v_{\alpha,1},...\}$. We need to add edges in such a way that the conditions above are satisfied. First, it is clear that we are not allowed to add edges in $V_{\alpha-1}$. We add edges between $\{v_{\alpha,0},v_{\alpha,1},...\}$ in such a way as in the proof of the previous theorem, so that $d(v_{\alpha,k})=a_{\alpha,k}$ and the induced graph on $\{v_{\alpha,0},v_{\alpha,1},...\}$ is connected. Finally we need to add edges between $\{v_{\alpha,0},v_{\alpha,1},...\}$ and $\mathcal{R}_0$ to make $\mathcal{R}_{\alpha}\simeq \mathcal{R}$. This can be done similarly as in the previous proof once we use the fact that for a pair of subgraphs $G_1,G_2\subseteq V_{\alpha}$ which we want to extend by one element connected to all vertices of $G_1$ and to none of $G_2$ we can always choose $u\in \mathcal{R}_0$ which has this property for the pair $G_1\cap \mathcal{R}_{\alpha-1},G_2\cap\mathcal{R}_{\alpha-1}$, due to the construction, namely the fourth condition in the above list.

To finish the proof it is enough to show that for all $j<i<\omega_1$ the group $G_{e_{j,i}}$ is trivial. Suppose it is not. There is a nontrivial $\alpha\in \Aut(\mathcal{R}_j)$ and $\beta\in \Aut(\mathcal{R}_i)$ such that $e_{j,i}\circ \alpha=\beta\circ e_{j,i}$. We prove that for $v\in V_{j+1}\setminus V_j$ it holds that $\beta(v)=v$. Indeed, otherwise $\beta(V_{j+1}\setminus V_j)=V_{k+1}\setminus V_k$ for some $j<k<i$ since $\{v_{j+1,0},v_{j+1,1},...\}=V_{j+1}\setminus V_j$ is connected and $\beta$ is an automorphism; but this means $A_{j+1}=\{d(v_{j+1,l})\}_{l<\omega}=d_(v_{k+1,l})_{l<\omega}=A_{k+1}$ which is a contradiction unless $j=k$ and $\beta\upharpoonright V_{j+1}\setminus V_j=id$. Together with the fourth condition from above we have that $\beta\upharpoonright \mathcal{R}_0=id$ and consequently with the same arguments we conclude that $\beta(v)=v$ for all $v\in V_{k+1}\setminus V_k$ where $k<i$.
\end{proof}

\section{The Urysohn space}

In this section by an \emph{embedding} we always mean an isometric embedding. Given a metric space $X$, its metric will be denoted by $d$; the distance from a point $x$ to a set $S$ will be denoted by $d(x,S)$.
Recall that the \emph{Urysohn space} $\mathbb{U}$ is a separable complete metric space satisfying the following property
\begin{itemize}
\item[(E)] For every finite metric spaces $E\subseteq F$ and for every embedding $e:E\to \mathbb{U}$ there exists an embedding $f:F\to \mathbb{U}$ such that $f\upharpoonright E=e$.
\end{itemize}
As in the graph case, this property characterizes $\U$ up to isometries and it implies homogeneity with respect to finite metric spaces.

\begin{definition}
We say that a metric space $X$ of density $\omega_1$ is \emph{Urysohn-like} space iff it is complete and can be represented as the colimit of a chain $(\mathbb{U},e_i)_{i<\omega_1}$.
\end{definition}

The following fact is easy to prove, by a simple closing-off argument.

\begin{proposition}
A complete metric space $X$ is Urysohn-like if and only if it has density $\omega_1$ and satisfies condition (E). 
\end{proposition}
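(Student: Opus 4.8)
The plan is to handle the two implications separately; the forward one is essentially free, and the backward one is a transfinite closing-off whose only non-bookkeeping ingredient is the classical characterization of $\U$ by approximate one-point extensions.

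For the forward implication, suppose $X$ is the colimit of a chain $(\U,e_i)_{i<\omega_1}$; by definition $X$ has density $\omega_1$, so only (E) needs checking. I would identify the copies of $\U$ with an increasing chain of closed subspaces $\U_0\subseteq\U_1\subseteq\dots$ of $X$, each isometric to $\U$, and first observe that $\bigcup_{i<\omega_1}\U_i$ is already complete: a Cauchy sequence in the union meets only countably many of the $\U_i$, hence lies in a single $\U_{i^*}$ (the supremum of countably many countable ordinals is countable, and the chain is increasing), and $\U_{i^*}\cong\U$ is complete. Thus the colimit, being the completion of the union, equals the union, i.e. $X=\bigcup_{i<\omega_1}\U_i$. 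Then, given finite metric spaces $E\subseteq F$ and an embedding $e\colon E\to X$, the finite set $e[E]$ lies in some $\U_{i^*}$, and since $\U_{i^*}\cong\U$ satisfies (E), $e$ extends to an embedding $f\colon F\to\U_{i^*}\subseteq X$. Hence (E) holds.

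For the converse, let $X$ be complete, of density $\omega_1$, and satisfy (E), and fix a dense set $\{d_\alpha:\alpha<\omega_1\}\subseteq X$. I would produce a continuous increasing chain $(X_i)_{i<\omega_1}$ of closed separable subspaces of $X$, each isometric to $\U$, with $\bigcup_{i<\omega_1}X_i=X$; taking the $e_i$ to be the inclusions then exhibits $X$ as the colimit of a chain $(\U,e_i)_{i<\omega_1}$. First I would build a continuous increasing chain $(Y_i)_{i<\omega_1}$ of countable subsets of $X$ with $d_\alpha\in Y_{\alpha+1}$, such that $Y_{i+1}$ is closed under the following operation: for every finite $E\subseteq Y_i$ and every $r\colon E\to\mathbb{Q}_{>0}$ for which the one-point extension of $E$ with $d(p,x)=r(x)$ is a genuine metric space, some point of $Y_{i+1}$ realizes this extension. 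Such a realizer exists in $X$ by (E), and only countably many tasks arise over a countable $Y_i$, so $Y_{i+1}$ may be kept countable. At limit stages one sets $Y_\lambda:=\bigcup_{i<\lambda}Y_i$; since every finite subset of $Y_\lambda$ already lies in some $Y_i$ with $i<\lambda$, the limit stages are automatically closed under the operation, so no separate limit-step bookkeeping is needed. I would then put $X_i:=\overline{Y_i}$, the closure in $X$: this is closed, hence complete, and separable. A routine argument, again using that the supremum of countably many countable ordinals is countable together with density of $\bigcup_iY_i\supseteq\{d_\alpha\}$ in $X$, gives continuity, $X_\lambda=\overline{\bigcup_{i<\lambda}X_i}$, and $\bigcup_{i<\omega_1}X_i=X$.

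It then remains to check that each $X_i$ is isometric to $\U$, i.e. satisfies (E); since $X_i$ is complete and separable and (E) characterizes $\U$, this completes the argument. This is the one step that is not pure bookkeeping, and it is exactly the classical fact underlying the identity $\overline{\mathbb{Q}\U}=\U$: a complete metric space in which every one-point extension of every finite subset can be realized up to an arbitrarily small error already satisfies (E), by iterating the approximate realizations into a Cauchy sequence converging to an exact realizer. In our situation $Y_i$ is dense in the complete space $X_i$ and, by construction, realizes within $Y_i$ every rational one-point extension of each of its finite subsets; approximating an arbitrary finite $E\subseteq X_i$ by a finite subset of $Y_i$ and an arbitrary one-point extension by a (suitably perturbed) rational one shows that $X_i$ has the approximate extension property, and hence $X_i\cong\U$. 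I expect the main obstacle to be not the transfinite construction, which becomes routine once the limit stages are seen to close off automatically, but this last verification — the place where completeness of $X$ is genuinely used.
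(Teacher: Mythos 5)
Your overall strategy is exactly the ``simple closing-off argument'' the paper alludes to (the paper gives no details), and both implications are organized correctly: the forward direction via the observation that a Cauchy sequence lives in a single $\U_{i^*}$ of the chain, and the converse via a continuous $\omega_1$-chain of countable sets whose closures form the desired chain of copies of $\U$. The forward half is fine, and your final reduction to the approximate extension property is the right classical fact (and your two-layer approximation does work: rational-valued Kat\v etov functions are dense in the Kat\v etov functions of any finite metric space, since the latter form a closed convex set with nonempty interior).

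There is, however, one step that fails as literally written. You require $Y_{i+1}$ to realize the rational one-point extensions of finite subsets of $Y_i$ only, and then assert in the last paragraph that ``by construction, $Y_i$ realizes within $Y_i$ every rational one-point extension of each of its finite subsets.'' That is true precisely at limit stages (where your remark that every finite subset already lies in some earlier $Y_j$ applies), but it is false at stage $0$ and at successor stages: a finite subset of $Y_{i+1}$ containing newly added points has had none of its extensions realized, so $X_{i+1}=\overline{Y_{i+1}}$ (and $X_0$) need not be isometric to $\U$. Concretely, if $Y_0$ is a single point, $\overline{Y_1}$ is certainly not $\U$. The repair is routine and you should state it: either build each $Y_{i+1}$ as the union of an $\omega$-chain $Y_i=Z_0\subseteq Z_1\subseteq\dots$ in which $Z_{n+1}$ realizes the extensions of finite subsets of $Z_n$ (so that $Y_{i+1}$ is genuinely closed under the operation applied to its own finite subsets), or keep your construction and pass to the subchain indexed by limit ordinals, which is again a continuous chain of length $\omega_1$ consisting of copies of $\U$ with union $X$. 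With either fix the argument is complete.
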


As in the graph case it is known that there is Kat\v etov functor for metric spaces so we have.

\begin{corollary}
There exists a homogeneous Urysohn-like space.
\end{corollary}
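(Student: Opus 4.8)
The plan is to carry out, for metric spaces, the construction behind the Theorem of Section~2. Since the class $\Met$ of complete separable metric spaces admits a Kat\v etov functor, Proposition~\ref{PropOne} yields a non-trivial (in particular non-surjective) embedding $e\colon\U\to\U$ with $G_e=\Aut(\U)$. I would form the continuous chain $(\U,e_i)_{i<\omega_1}$ with every $e_i$ equal to $e$, write $\U_\alpha$ for its $\alpha$-th term --- a copy of $\U$, with $\U_\alpha\subsetneq\U_{\alpha+1}$ (identified along $e$) and $\U_\lambda$ the completion of $\bigcup_{\beta<\lambda}\U_\beta$ at limits --- and let $X$ be the colimit. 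Then $X$ is complete by construction and is presented as a colimit of a chain of copies of $\U$, so to conclude that $X$ is Urysohn-like it remains to check that its density is $\omega_1$ and that it is homogeneous.

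First I would prove the auxiliary fact that the underlying set of $X$ already equals $\bigcup_{\alpha<\omega_1}\U_\alpha$, i.e.\ nothing is added when passing to the completion at the final step. Indeed, a Cauchy sequence in that union has countable range, hence all its terms lie in a single $\U_\alpha$ (take $\alpha$ to be the countable supremum of the indices of the terms); as $\U_\alpha$ is a copy of $\U$ and so complete, the sequence converges inside the union. Two things now follow. For the density: a countable dense subset of $X$ would be contained in some $\U_\alpha$, forcing $\U_\alpha=X$, contrary to $\U_\alpha\subsetneq\U_{\alpha+1}$; hence $X$ has density $\omega_1$. For homogeneity: given an isometry $\phi\colon S\to S'$ between finite subsets of $X$, both $S$ and $S'$ lie in a common $\U_\alpha$, and since $\U_\alpha$ is isometric to $\U$, property~(E) together with the usual back-and-forth extends $\phi$ to an automorphism $\gamma_\alpha$ of $\U_\alpha$. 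I would then lift $\gamma_\alpha$ up the tail of the chain: having an automorphism $\gamma_\beta$ of $\U_\beta$, the hypothesis $G_e=\Aut(\U)$ --- applied to the copy of $e$ at stage $\beta$ --- supplies $\gamma_{\beta+1}\in\Aut(\U_{\beta+1})$ with $\gamma_{\beta+1}\upharpoonright\U_\beta=\gamma_\beta$, while at limit stages one takes the union and then its unique isometric surjective extension to the completion. At stage $\omega_1$ the union $\Gamma$ of this coherent family is an automorphism of $\bigcup_{\beta<\omega_1}\U_\beta=X$ extending $\phi$, so $X$ is homogeneous, completing the proof.

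I expect the main obstacle to be precisely this completeness observation. In the Rado-graph case the colimit is literally the union, so every finite subset trivially lies in a single copy of $\mathcal{R}$; for metric spaces the colimit is \emph{a priori} only the completion of the union, and the lifting argument would break down if a finite subset of $X$ failed to sit inside some single copy of $\U$. Once the union is known to be complete, the rest is a routine transcription of the argument for the Theorem and of the Rado-graph construction.
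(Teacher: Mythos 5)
Your proposal is correct and follows exactly the paper's route: the paper proves this corollary by citing the Kat\v etov functor for metric spaces and the Theorem of Section~2, whose proof is literally ``take $e_i:=e$ from Proposition~\ref{PropOne}.'' You simply spell out the details the paper leaves implicit --- notably that the union $\bigcup_{\alpha<\omega_1}\U_\alpha$ is already complete, so finite subsets sit inside a single copy of $\U$ and automorphisms can be lifted along the chain using $G_e=\Aut(\U)$ --- all of which is sound.
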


Our goal is to construct an Urysohn-like space $X$ that is rigid. We want to prove similar results as in the Rado-like case. We may use a similar strategy as in the graph case to prove that there is an embedding $e:\mathbb{U}\to\mathbb{U}$ which does not extend any non-trivial automorphism (recall that an isomorphism is a bijective isometry).
Roughly speaking, we shall add a special point $x$ to $\mathbb{U}$ and then fill the space such that we obtain again an isometric copy of $\mathbb{U}$ in such a way that this special point $x$ must be preserved by every automorphic extension. Since the Urysohn space has no isolated points, we must assure ourselves that after filling the space with some countable dense part to obtain again $\mathbb{U}$ there will be no point in its closure with similar properties as $x$ has. 

For a metric space $X$ we denote its metric extension by adding a set $\{x_i\}_{i\in I}$ as $X\oplus_{i\in I}x_i$. This notation means that not only we add the points but we have already chosen a metric on the new space. We denote the metric on all spaces by $d$ because it is always clear from the context which space we mean.
The \Fra\ limit of all finite rational metric spaces is denoted by $\mathbb{Q}\mathbb{U}$. It is well-known that $\overline{\mathbb{Q}\mathbb{U}}=\mathbb{U}$. 

\begin{definition}
Given a positive $r\in\mathbb{R}$ define $\mathcal{M}_r$ to be the category of finite metric spaces with the following property. Objects are spaces of the form $E\oplus x$ where $E$ is a finite metric space, $x$ is a special distinguished point and $d(x,y)\ge r$ for all $y\in E$. Isometric embeddings $f:E\oplus x\to F\oplus x'$ are morphisms in $\mathcal{M}_r$ provided that $f(x)=x'$. We denote by $\mathbb{Q}\mathcal{M}_r$ the subcategory of $\mathcal{M}_r$ consisting of all rational metric spaces.
\end{definition}

We always denote the special point by $x$. First observation is that $\mathbb{Q}\mathcal{M}_r$ is a \Fra\  class since it has amalgamations similar as in the category of metric spaces. The next definition is crucial, as it distinguishes continuum many different one point extensions of $\mathbb{U}$. 

\begin{definition}
For $0<r\in\mathbb{R}$ we say that the one point extension $\mathbb{U}\oplus x$ of the Urysohn space is \emph{$r$-Urysohn} (or \emph{$r$-Urysohn type} or simply \emph{$r$-type}) iff the following holds
\begin{itemize}
\item $d(x,\mathbb{U})=r$,
\item for every pair $E\oplus x\subseteq F\oplus x\in\mathcal{M}_r$ and for every embedding $e:E\oplus x\to \mathbb{U}\oplus x$ such that $e(x)=x$ there is an embedding $f:F\oplus x\to \mathbb{U}\oplus x$ such that $f\upharpoonright E\oplus x=e$.
\end{itemize}
\end{definition}

\begin{observation}
For every $0<r\in\mathbb{R}$ there is an r-type and it is unique up to isomorphisms (i.e., isometries preserving the special point).
\end{observation}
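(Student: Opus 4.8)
The plan is to establish existence and uniqueness of the $r$-type separately, in both cases reducing to the rational setting and then passing to the completion. For \textbf{existence}, I would first work with the category $\mathbb{Q}\mathcal{M}_r$, which the paper has already observed is a \Fra\ class: it has the joint embedding property, the amalgamation property (inherited from the amalgamation of rational metric spaces, with the special point simply identified in the amalgam, the constraint $d(x,\cdot)\ge r$ being preserved because amalgamation over a common subspace never decreases distances), and only countably many isomorphism types. Hence $\Flim(\mathbb{Q}\mathcal{M}_r)$ exists; call it $\mathbb{Q}\mathbb{U}\oplus x$, a countable rational metric space with a distinguished point $x$ at distance exactly $r$ from every other point, and $\omega$-saturated within $\mathbb{Q}\mathcal{M}_r$. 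I would then take the metric completion and define $\mathbb{U}\oplus x:=\overline{\mathbb{Q}\mathbb{U}\oplus x}$. Two things need checking: that the underlying space $\overline{\mathbb{Q}\mathbb{U}}$ obtained this way really is (isometric to) $\mathbb{U}$ — this follows because forgetting the special point, the rational part still realizes all one-point extensions of its finite subspaces (every finite rational configuration not involving $x$ can be extended using saturation of $\mathbb{Q}\mathcal{M}_r$ applied to a configuration that ignores $x$, or by interposing $x$ at a large enough distance), so it is a copy of $\mathbb{Q}\mathbb{U}$ and its completion is $\mathbb{U}$; and that $d(x,\mathbb{U})=r$, which holds because $d(x,\mathbb{Q}\mathbb{U})=r$ is attained in the limit only approximately among rationals but the infimum is exactly $r$, and no point of the completion can be closer since distances are continuous and all rational points are at distance $\ge r$.

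Next I would verify that $\mathbb{U}\oplus x$ so constructed actually has the extension property (E) relative to $\mathcal{M}_r$. Given $E\oplus x\subseteq F\oplus x$ in $\mathcal{M}_r$ and an embedding $e\colon E\oplus x\to\mathbb{U}\oplus x$ fixing $x$, I would approximate: perturb $E$ and $F$ to nearby rational spaces $E'\oplus x\subseteq F'\oplus x$ in $\mathbb{Q}\mathcal{M}_r$ (keeping the special point and its distances, which are $\ge r$; if $r$ is irrational one perturbs those distances to rationals $\ge r$ as well, at the cost of an $\varepsilon$ which is absorbed below), approximate $e$ by a rational near-embedding into $\mathbb{Q}\mathbb{U}\oplus x$, apply $\omega$-saturation of $\mathbb{Q}\mathbb{U}\oplus x$ to extend it over $F'$, and then invoke completeness of $\mathbb{U}$ together with a standard back-and-forth/limiting argument (exactly the one used to show $\mathbb{U}$ itself has (E), since $\overline{\mathbb{Q}\mathbb{U}}=\mathbb{U}$) to correct the errors and obtain an honest isometric extension $f\colon F\oplus x\to\mathbb{U}\oplus x$ with $f\upharpoonright E\oplus x=e$. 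The only subtlety is keeping the special point pinned throughout the approximation, which is automatic because its distances are frozen at $\ge r$ and never participate in the $\varepsilon$-corrections applied to the other points.

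For \textbf{uniqueness}, suppose $\mathbb{U}\oplus x$ and $\mathbb{U}\oplus x'$ are both $r$-types. I would run a back-and-forth argument between dense sequences of the two spaces. Enumerate countable dense subsets $D=\{x,d_1,d_2,\dots\}$ and $D'=\{x',d'_1,d'_2,\dots\}$, starting the partial isometry by sending $x\mapsto x'$; at each stage we have a finite partial isometry that sends the special point to the special point, hence is a morphism in $\mathcal{M}_r$ (after replacing the current finite configurations by $\varepsilon$-close rational ones in $\mathbb{Q}\mathcal{M}_r$, with $\varepsilon$ shrinking geometrically), and the $r$-Urysohn extension property lets us extend it to cover the next point on either side. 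The limit of these partial maps extends to an isometry of the completions carrying $x$ to $x'$; that the resulting bijection is total and surjective is the usual consequence of going back and forth through both dense sequences.

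\textbf{Main obstacle.} The routine part is the amalgamation and the mechanical back-and-forth; the delicate point is the interplay between the real parameter $r$ and the rational approximation scheme — one must make sure that perturbing a space of $\mathcal{M}_r$ to a rational space does not force the special-point distances below $r$ (solved by always rounding those distances \emph{up} to rationals $\ge r$) and that the accumulated $\varepsilon$-errors from both the rounding of $r$ and the rounding of the rest of the configuration can be simultaneously absorbed by completeness of $\mathbb{U}$. I expect this bookkeeping, rather than any conceptual difficulty, to be where care is needed; everything else is parallel to the standard construction of $\mathbb{U}$ from $\mathbb{Q}\mathbb{U}$.
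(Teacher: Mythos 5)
Your proposal follows essentially the same route as the paper: existence by taking the completion of the \Fra\ limit of $\mathbb{Q}\mathcal{M}_r$ (mirroring the argument that $\overline{\mathbb{Q}\mathbb{U}}=\mathbb{U}$), and uniqueness by back-and-forth between countable dense sets fixing the special point. The only quibble is your passing claim that $x$ is ``at distance exactly $r$ from every other point'' of the rational limit --- the distances are only $\ge r$ with infimum $r$ --- but you correct this yourself later, and the rest of your (more detailed) bookkeeping matches what the paper leaves implicit.
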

\begin{proof}
We prove that the $r$-type $\mathbb{U}\oplus x$ is the completion of the \Fra\ limit of $\mathbb{Q}\mathcal{M}_r$. It is the same argument as in the proof of $\overline{\mathbb{Q}\mathbb{U}}=\mathbb{U}$, because the \Fra\  limit of $\mathbb{Q}\mathcal{M}_r$ has the form $\mathbb{Q}\mathbb{U}\oplus x$.

Uniqueness can be proved by the back-and-forth argument when we fix a countable dense set in each space as in the case of proving uniqueness of $\mathbb{U}$.
\end{proof}

\begin{observation}
Let $e:\mathbb{U}\hookrightarrow \mathbb{U}$ be an isometric embedding and let $x$ be a realization of some $r$-type over $e(\mathbb{U})$. Assume that we have a pair $\alpha,\beta$ where $\alpha:\mathbb{U}\to \mathbb{U}$ is an automorphism and $\beta$ extends $\alpha$ via $e$. Then $\beta(x)$ is again an $r$-type over $e(\mathbb{U})$.
\end{observation}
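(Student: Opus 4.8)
The plan is to show that $\beta$ restricts to a pointed isometry from $e(\U)\oplus x$ onto $e(\U)\oplus\beta(x)$, and then to invoke the fact that being an $r$-type is invariant under such isometries, together with the uniqueness part of the preceding observation.

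First I would extract the algebraic content of the phrase ``$\beta$ extends $\alpha$ via $e$'', namely $\beta\circ e=e\circ\alpha$. Since $\alpha$ is an automorphism of $\U$ it is in particular surjective, so
\[
\beta[e(\U)]=e[\alpha[\U]]=e[\U]=e(\U),
\]
i.e.\ $\beta$ maps the copy $e(\U)$ \emph{onto} itself. As $\beta$ is an isometry of $\U$, this immediately gives $d(\beta(x),e(\U))=d(\beta(x),\beta[e(\U)])=d(x,e(\U))=r$; in particular $\beta(x)\notin e(\U)$, and the first clause of the definition of an $r$-type is satisfied by $e(\U)\oplus\beta(x)$.

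Next I would treat the extension clause. The restriction of $\beta$ to $e(\U)\cup\{x\}$ is an isometry onto $e(\U)\cup\{\beta(x)\}$ that carries $e(\U)$ to $e(\U)$ and $x$ to $\beta(x)$; hence it is an isomorphism, in the sense of isometries preserving the special point, between the one-point extensions $e(\U)\oplus x$ and $e(\U)\oplus\beta(x)$. It then suffices to observe that the two conditions defining an $r$-type transfer along any such isomorphism: given $E\oplus x\subseteq F\oplus x$ in $\mathcal{M}_r$ and an embedding $g\colon E\oplus x\to e(\U)\oplus\beta(x)$ with $g(x)=\beta(x)$, one composes $g$ with the inverse of $\beta\upharpoonright(e(\U)\cup\{x\})$ to obtain an embedding into $e(\U)\oplus x$, applies the $r$-type property of $e(\U)\oplus x$ (which holds by hypothesis) to extend it to $F\oplus x$, and composes back with $\beta$ to get the desired extension of $g$. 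Thus $e(\U)\oplus\beta(x)$ is an $r$-type, and by the uniqueness stated in the preceding observation it realizes the same $r$-type over $e(\U)$.

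I do not expect a genuine obstacle here; the one point that must be handled with care is the use of surjectivity of $\alpha$ (which is precisely why $\alpha$ is assumed to be an automorphism and not merely an embedding) in order to conclude $\beta[e(\U)]=e(\U)$ rather than just $\beta[e(\U)]\subseteq e(\U)$. With that in hand, everything reduces to the routine fact that ``$r$-type'' is an isometry-invariant property of a one-point extension of a copy of $\U$.
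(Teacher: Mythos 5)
Your argument is correct. Note that the paper actually states this observation without any proof at all (it is treated as immediate), so there is nothing to compare against; your write-up supplies exactly the argument the author presumably had in mind: from $\beta\circ e=e\circ\alpha$ and surjectivity of $\alpha$ you get $\beta[e(\mathbb{U})]=e(\mathbb{U})$, hence $\beta$ restricts to a pointed isometry of $e(\mathbb{U})\oplus x$ onto $e(\mathbb{U})\oplus\beta(x)$, and both clauses of the definition of an $r$-type transfer along such an isometry. Your emphasis on where surjectivity of $\alpha$ is used is exactly the right point to flag; the final appeal to uniqueness is harmless but not needed, since the statement only asks that $\beta(x)$ realize an $r$-type.
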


We need to define another type of one point extensions of $\mathbb{U}$ which will fill-out the space making it again Urysohn in such a way that no limit point of a sequence of this types is an $r$-type.

\begin{definition}
For an arbitrary metric space $X$ we say that the one point extension $X\oplus x$ \emph{has finite support} over $X$ (or it is a \emph{finitely supported type}, or $x$ \emph{realizes a finitely supported type}) iff there is a finite set $Y\subseteq X$ such that
$$d(x,z)=\inf\{d(x,y)+d(y,z):y\in Y\}.$$
\end{definition}

It can be easily shown that the formula above  is a correct definition of a metric. 

\begin{lemma}
Assume that we have a space $\mathbb{U}\oplus_{i\in\mathbb{N}} x_i\oplus y$
where $y$ realizes a type with finite support over $\mathbb{U}\oplus_{i\in\mathbb{N}} x_i$. Then $y$ does not realize any $r$-type for $0<r\in \mathbb{R}$ over $\mathbb{U}$.
\end{lemma}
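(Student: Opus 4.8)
The plan is to argue by contradiction. Suppose that $\mathbb{U}\oplus y$ is an $r$-type over $\mathbb{U}$ for some real $r>0$, let $Y\subseteq\mathbb{U}\oplus_{i\in\mathbb{N}}x_i$ be a finite support of $y$, and put $c_w:=d(y,w)$ for $w\in Y$. Since $y$ is a genuinely new point, $c_w>0$ for every $w\in Y$, and by the definition of finite support $d(y,z)=\min_{w\in Y}(c_w+d(w,z))$ for all $z$, in particular for $z\in\mathbb{U}$. The key structural remark concerns the set $S:=\{z\in\mathbb{U}:d(y,z)=r\}$: if $z\in S$ the above minimum is attained, so $c_w+d(w,z)=r$ for some $w\in Y$; hence $c_w\le r$ and $d(w,z)=r-c_w$. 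Thus, setting $Y':=\{w\in Y:c_w\le r\}$ and $B_w:=\{z\in\mathbb{U}:d(w,z)=r-c_w\}$, we have $S\subseteq\bigcup_{w\in Y'}B_w$, and by the triangle inequality each $B_w$ has diameter at most $2(r-c_w)$. Note that this works uniformly, whether $w$ lies in $\mathbb{U}$ or among the $x_i$.

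The next step produces, from the $r$-type property, arbitrarily large well-separated subsets of $S$. Given $n\in\mathbb{N}$ and $\ell$ with $0<\ell<2r$, let $P$ be the metric space on $n$ points with all pairwise distances equal to $\ell$, and form $P\oplus x$ by adjoining a point $x$ with $d(x,p)=r$ for every $p\in P$. Since $\ell<2r$ the triangle inequalities hold, and since every distance to $x$ equals $r$ we have $P\oplus x\in\mathcal{M}_r$; moreover $\emptyset\oplus x=\{x\}$ is a subobject of $P\oplus x$. As $\mathbb{U}\oplus y$ is an $r$-type, the embedding $\{x\}\to\mathbb{U}\oplus y$ carrying the special point to $y$ extends to an embedding $f:P\oplus x\to\mathbb{U}\oplus y$ with $f(x)=y$; as $f$ is isometric and $d(p,x)=r$, $d(p,p')=\ell$, the $f$-images of the points of $P$ are $n$ distinct points of $S$ that are pairwise at distance $\ell$. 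In particular, taking $n=1$ shows $S\neq\emptyset$, hence (by the first step) $Y'\neq\emptyset$, so $\varepsilon_0:=\min_{w\in Y'}c_w$ is a well-defined positive real and each $B_w$ with $w\in Y'$ has diameter at most $2(r-\varepsilon_0)<2r$.

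To finish, fix $\ell\in(2r-2\varepsilon_0,\,2r)$ and apply the second step with $n=|Y'|+1$, obtaining $|Y'|+1$ distinct points of $S$ that are pairwise at distance $\ell$. Each of them lies in one of the $|Y'|$ sets $B_w$, so by pigeonhole two of them lie in the same $B_w$ and are therefore at distance at most $2(r-\varepsilon_0)$, which is strictly less than $\ell$ --- a contradiction. The main obstacle, I expect, is getting the first step exactly right: one must observe that support points lying among the $x_i$ rather than in $\mathbb{U}$ contribute sets $B_w$ of ``radius'' $r-c_w$ just as points of $\mathbb{U}$ do, and that the strict inequality $r-c_w<r$ --- equivalently $c_w>0$, which holds precisely because $y\notin\mathbb{U}\oplus_{i\in\mathbb{N}}x_i$ --- is the only fact that forces these sets to have diameter bounded away from $2r$. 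Once this is isolated, the remaining verifications (the triangle inequalities, membership in $\mathcal{M}_r$, and the pigeonhole count) are routine.
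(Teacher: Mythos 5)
Your proof is correct and follows essentially the same route as the paper's: realize an equilateral configuration of points at distance $r$ from $y$ using the $r$-type extension property, apply the pigeonhole principle over the finite support, and derive a contradiction from the triangle inequality through the shared support point. The only cosmetic difference is the parametrization --- the paper fixes the side length at exactly twice the apex distance $L$ and lets $L$ exceed all distances to the support, whereas you keep the apex distance at $r$ and choose the side length $\ell$ in the nonempty window $(2r-2\varepsilon_0,2r)$; both hinge on the same fact that $y$ has strictly positive distance to each support point.
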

\begin{proof}
We will construct a finite space $E\oplus w$ which is not realized in $\mathbb{U}\oplus y$ when sending $w$ to $y$. Moreover $d(w,E)$ can be arbitrarily big, which means that it is not an $r$-type for any $r\in\mathbb{R}$. Assume that there are $n$ points $\{a_i\}_{i\le n}$ realizing the finite support of $y$. Fix a number $L$ bigger than any $d(y,a_i)$. 

Let the universe of the space $E$ be formed by $n+2$ distinct points $\{w_i\}_{i\le n+1}\cup \{w\}$. Let the metric be as follows
\begin{itemize}
\item for $i\not=j$ set $d(w_i,w_j)=2L$,
\item $d(w,w_i)=L$.
\end{itemize} 
It is easy to see that this is actually a metric space. We claim that it cannot be realized in the sense described before. Suppose it is. So there is a mapping $f:E\oplus w\to \mathbb{U}\oplus y$ such that $f(w)=y$. Denote $f(w_i)$ as $y_i$. By the pigeon hole principle there are $i\not=j$ and $k\le n$ such that $d(y,y_i)=d(y,a_k)+d(a_k,w_i)$ and $d(y,y_j)=d(y,a_k)+d(a_k,y_j)$. But $\mathbb{U}\oplus_{i\in\mathbb{N}} x_i\oplus y$ is a metric space so we must have $2L=d(y_j,y_i)\le d(y_j,a_k)+d(a_k,y_i)=2L-2n_k$, which is a contradiction.
\end{proof}

A natural question is how far can the closest $r$-type realized by $z$ be from our fixed point $y$ of finite support. Imagine that there is a mapping $f:E\oplus w\to\mathbb{U}\oplus z$ such that $e(w)=z$. By the triangle inequality and an argument leading to a contradiction in the previous proof, we have for the contradicting pair $w_j,w_i$ and $k\le n$ that
$$d(z,y)+L=d(z,y)+d(z,f(w_i)\ge d(y,f(w_i))=d(y,a_k)+d(a_k,f(w_i)).$$
Again by the triangle inequality 
$$d(a_k,f(w_j))+d(a_k,f(w_i))\ge d(f(w_j),f(w_i))=2L$$
therefore $\max\{d(a_k,f(w_i)),d(a_k,f(w_j))\}\ge L$. All these together gives rise to
$$d(z,y)+L\ge d(y,a_k)+\max\{d(a_k,y_i),d(a_k,y_j)\}\ge d(y,a_k)+L.$$

\begin{lemma}
Assume that we have a space $\mathbb{U}\oplus_{i\in\mathbb{N}} x_i\oplus y$
where $y$ realizes a type with finite support over $\mathbb{U}\oplus_{i\in\mathbb{N}} x_i$.
Then it is not an $r$-type over $\mathbb{U}$ for any $r\in \mathbb{R}$. Moreover, every realization $z$ of some $r$-type must satisfy $d(z,y)\ge d(y,\mathbb{U}\oplus_{i\in\mathbb{N}} x_i)$. 
\end{lemma}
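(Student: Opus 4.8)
The plan is to combine the two estimates already developed in the excerpt: the first Lemma (with its accompanying computation in the paragraph before this statement) shows that a finitely supported type $y$ is not an $r$-type, while the displayed inequalities just above give a quantitative lower bound on how far any realized $r$-type must sit from $y$. So the first assertion is literally the previous Lemma, and only the ``moreover'' clause requires argument. I would restate the construction of the auxiliary space $E\oplus w$ from the previous proof, with the same metric $d(w_i,w_j)=2L$ and $d(w,w_i)=L$, where now $L$ is chosen larger than $\max_i d(y,a_i)$ and $\{a_i\}_{i\le n}$ is the finite support of $y$; the point is that this witness configuration already contradicts $y$ being any $r$-type, so it is available to bound realizations $z$ as well.

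For the ``moreover'' clause I would argue by contradiction: suppose $z$ realizes some $r$-type over $\mathbb{U}$ and $d(z,y)<d(y,\mathbb{U}\oplus_{i\in\mathbb{N}}x_i)$. The key observation is that $d(y,\mathbb{U}\oplus_{i\in\mathbb{N}}x_i)=\min_i d(y,a_i)$ by definition of finite support (the infimum defining the metric is attained at one of the support points, and for any other point $u$ the triangle inequality through some $a_k$ gives $d(y,u)\ge d(y,a_k)$). Now take $L$ even larger — larger than $\max_i d(y,a_i)$ is enough for the previous step, but here I also want the $r$-type property of $z$ to force a realization $f\colon E\oplus w\to \mathbb{U}\oplus z$ with $f(w)=z$; since $E\oplus w$ is a finite space with $d(w,E)=L$, and $z$ realizes an $r$-type for whatever $r=d(z,\mathbb{U})$ is, one must be slightly careful that $E\oplus w\in\mathcal{M}_r$, i.e.\ that $L\ge r$. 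Choosing $L$ large handles this. Then $f$ exists, and the chain of inequalities displayed just before the Lemma applies verbatim to $z$ in place of a generic realization, yielding
$$d(z,y)+L\ge d(y,a_k)+L,$$
hence $d(z,y)\ge d(y,a_k)\ge \min_i d(y,a_i)=d(y,\mathbb{U}\oplus_{i\in\mathbb{N}}x_i)$, contradicting our assumption.

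The main obstacle I anticipate is bookkeeping around the parameter $L$ and making sure the auxiliary space $E\oplus w$ genuinely lies in the relevant category $\mathcal{M}_r$ so that the $r$-type extension property of $z$ can be invoked — one needs $d(w,w_i)=L\ge r$ and the images $f(w_i)$ to avoid collapsing, which the $2L$ spacing guarantees. A second, more subtle point is verifying that the pigeonhole step still goes through when we route through $z$ rather than directly: because $z$ itself need not have finite support, I instead use that $d(z,f(w_i))\le d(z,y)+d(y,f(w_i))$ and that $y$ does have finite support, so the support point $a_k$ that produces the contradiction is a support point of $y$, exactly as in the preamble computation. Once those two technical checks are in place, the inequality chain is routine and the contradiction is immediate; I would present the argument compactly, referencing the previous Lemma for the non-$r$-type part and only spelling out the distance estimate for the ``moreover''.
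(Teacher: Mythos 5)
Your proposal is correct and takes essentially the same route as the paper: the non-$r$-type claim is delegated to the preceding lemma, and the ``moreover'' clause is obtained by realizing the same witness space $E\oplus w$ over $z$ and running the displayed chain $d(z,y)+L\ge d(y,a_k)+L$ from the discussion just before the statement, which is exactly what the paper's one-line proof invokes (your extra care about $L\ge r$ so that $E\oplus w\in\mathcal{M}_r$ is a point the paper glosses over). The only nit is that the triangle inequality you quote should be $d(y,f(w_i))\le d(y,z)+d(z,f(w_i))$ rather than the routing you wrote, but since you then apply the displayed chain verbatim this does not affect the argument.
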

\begin{proof}
The additional part follows from the above discussion and
$$d(z,y)\ge d(y,a_k)\ge \min_{i\le n}\{d(y,a_i)\}= d(y,\mathbb{U}\oplus_{i\in\mathbb{N}} x_i).$$
\end{proof}

Assume that we have a separable metric space $X$. We want to enlarge it to $X\oplus_{i<\omega}x_i$ such that $\overline{X\oplus_{i<\omega}x_i}\simeq \mathbb{U}$. There is a construction using so-called Kat\v etov maps. There is a natural metric on the space $tp(X)$ of all one point extensions of the metric space $X$, but this space may no longer be separable. The Kat\v etov construction adds to $X$ the subspace of $tp(X)$ which is generated by all extensions with finite support. We denote it by $X^1$ and put $X^{n+1}:=(X^n)^1$. It is not surprising that $\overline{\bigcup_{n<\omega}X^n}\simeq \mathbb{U}$. Because the subspace of $tp(X)$ generated by types of finite support is separable, it suffices to choose only countable many such types and add all of them. After iterating this process we may re-enumerate these types in such a way that $\overline{X\oplus_{i<\omega}x_i}\simeq \mathbb{U}$ and $x_n$ has finite support over $X\oplus_{i<n} x_i$. So in fact for every separable metric space there is a sequence of finitely supported types, but with the support on the previously defined ones, turning it to $\mathbb{U}$. This can be described more directly. Fix a countable dense set $Y \subseteq X$ and build a space $Y\oplus_{i<\omega} y_i$ such that $y_n$ has rational finite support over $Y\oplus_{i<n} y_i$.
We have to make sure that we eventually use all such types. That is possible since there are only countably many possibilities. In the next definition we just state what we mean by iterated finite support which will be needed as the previous comment suggests. 

\begin{definition}
Let $X$ be a metric space and consider its extension $X\oplus_{i\le n}y_i$ such that $y_k$ has finite support over $X\oplus_{i<k}y_i$ for all $k\le n$. We assign to $y_n$ a finite subset $s_{y_n}(X)\subseteq X$ such that for all $y\in s_{y_n}(X)$ there is a finite subsequence $\{y_{i_k}\}_{k\le m}$ with $y_n=y_{i_0}$, $y=y_{i_m}$ and $y_{i_{k+1}}$ is in the support of $y_{i_k}$ for all $k<m$.
We denote all such subsequences associated to a point $y$ by $seq_{y_n\to y}(X)$. We define a function $d_{y_n}:Y\to \mathbb{R}$ by
$$d_{y_n}(y):=\min\left\{\sum_{k=0}^{m-1} d(y_{i_k},d_{i_{k+1}}):\{y_{i_k}\}_{k\le m}\in seq_{y_n\to y}(X) \ \right\}.$$
\end{definition}

The set $s_{y_n}(X)$ is finite and for $z\in X$ we can compute any distance $d(y_n,z)$ by
$$d(y_n,z)=\min\{d_{y_n}(y)+d(y,z):y\in s_{y_n}(X)\}$$
because it is just an iteration of finite support. Notice that $d(y_n,y)\le d_{y_n}(y)$ for all $y\in supp_{y_n}(X)$ but the equality does not hold in general. In fact there is a subset of $s_{y_n}(X)$ for which it does hold and $d(y_n,z)$ can be calculated using this subset only. Let us denote this subset by $supp_{y_n}(X)$. Then the type which $y_n$ realizes over $X$ is finitely supported on the $supp_{y_n}(X)$ with distances described above.

\begin{lemma}
Let $X=(\mathbb{U}\oplus_{j<\omega}x_j)$ be a metric space. Assume that we have an extension $X \oplus_{i\le n}y_{i}$ such that $y_k$ has finite support over $X\oplus_{i<k}y_i$ for all $k\le n$ and assume that we add a point $z$ realizing an $r$-type over $\mathbb{U}$. Than we have that
$$d(y_n,z)\ge d(y_n,\mathbb{U}\oplus_{j<\omega}x_{j}).$$
\end{lemma}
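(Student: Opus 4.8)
The plan is to reduce the statement to the previous lemma, which already bounds the distance from a realization of an $r$-type to a point that is \emph{genuinely} finitely supported over $\mathbb{U}\oplus_{j<\omega}x_j$. The only thing standing in the way is that $y_n$ is handed to us as the last term of a chain of finite supports over $X=\mathbb{U}\oplus_{j<\omega}x_j$ rather than as a single finitely supported type over $X$. So the first thing I would do is invoke the collapse of iterated finite support described just before the statement: $y_n$ realizes a finitely supported type over $X$, with a finite support $supp_{y_n}(X)\subseteq X$, so that
$$d(y_n,w)=\min\{\,d(y_n,s)+d(s,w)\ :\ s\in supp_{y_n}(X)\,\}$$
for every $w\in X$; in particular $d(y_n,X)=\min\{\,d(y_n,s):s\in supp_{y_n}(X)\,\}$. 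This is exactly the iteration-of-finite-support computation, after replacing $s_{y_n}(X)$ and the auxiliary function $d_{y_n}$ by the subset $supp_{y_n}(X)$ on which $d_{y_n}$ coincides with the true distance.

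Next I would change the ambient space and pass to the metric subspace $X\cup\{y_n\}\cup\{z\}$ of $X\oplus_{i\le n}y_i\oplus z$. Inside this subspace, $X$ is still $\mathbb{U}\oplus_{j<\omega}x_j$; the point $y_n$ still realizes the finitely supported type over $X$ found in the first step; and $z$ still realizes an $r$-type over $\mathbb{U}$, since being an $r$-type is a property of the pair $(\mathbb{U},z)$ alone and $\mathbb{U}\cup\{z\}$ lies inside this subspace. Hence the hypotheses of the previous lemma are met for the space $(\mathbb{U}\oplus_{j<\omega}x_j)\oplus y_n\oplus z$, with $y_n$ in the role of the finitely supported point, and it yields $d(z,y_n)\ge d(y_n,\mathbb{U}\oplus_{j<\omega}x_j)$, which is the claim.

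The only real content sits in the first step, and that is where I expect the main obstacle to lie. A naive induction on $n$ that applies the previous lemma to each intermediate point $y_i$ and then uses the triangle inequality fails: for a support point $y_i$ of $y_n$ it gives only $d(z,y_n)\ge d(z,y_i)-d(y_n,y_i)\ge d(y_i,X)-d(y_n,y_i)$, which may be well below $d(y_n,X)$. One has to unwind the nested minima defining $d_{y_n}$ into a single minimum over the terminal support points lying in $X$, so that $y_n$ becomes an honest finitely supported type over $X$ and the lemma can be invoked at once. This unwinding is a routine induction on the length of the support sequences $seq_{y_n\to y}(X)$ once the bookkeeping of the preceding definitions is set up, so the argument itself is short.
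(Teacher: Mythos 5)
Your proposal is correct and follows essentially the same route as the paper: the paper's proof is exactly the one-line observation that, by the preceding discussion of iterated finite support (the collapse onto $supp_{y_n}(X)$), the point $y_n$ is genuinely finitely supported over $X=\mathbb{U}\oplus_{j<\omega}x_j$, after which the previous lemma applies directly. Your additional remarks on why a naive induction with the triangle inequality would fail, and on passing to the subspace $X\cup\{y_n\}\cup\{z\}$, are sound but not needed beyond what the paper already records.
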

\begin{proof}
This follows immediately from the previous lemma because, due to the discussion above, $y_n$ is in fact finitely supported over $X$.
\end{proof}  

Before we prove the next lemma, recall that the Urysohn $r$-type is $\mathbb{U}\oplus x=\overline{\mathbb{Q}\mathbb{U}\oplus x}$, where $\mathbb{Q}\mathbb{U}\oplus x$ is the \Fra\ limit of $\mathbb{Q}\mathcal{M}_r$.

\begin{lemma}
For a sequence $\{r_i\}_{i<\omega}$ with $r_i>1$ there is an extension of the Urysohn space $\mathbb{U}\oplus_{i<\omega}x_i$ with following properties
\begin{itemize}
\item $x_i$ realizes each $r_i$-type,
\item $d(x_i,x_j)>1$ for $i\not=j$,
\item for every pair of points $\{x,y\}\in [\mathbb{Q}\mathbb{U}]^2$ there is $i<\omega$ such that $d(x,x_i)+d(x,y)=d(x_i,y)$.
\end{itemize}
\end{lemma}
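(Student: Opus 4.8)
The plan is to do everything at the level of countable rational metric spaces and only then pass to completions, using the recalled fact that the $r$-type $\mathbb{U}\oplus x$ equals $\overline{\mathbb{Q}\mathbb{U}\oplus x}$, where $\mathbb{Q}\mathbb{U}\oplus x$ is the \Fra\ limit of $\mathbb{Q}\mathcal{M}_r$. Concretely, I would build by a closing-off (one-sided back-and-forth) construction an increasing chain $P_0\subseteq P_1\subseteq\cdots$ of finite rational metric spaces, each carrying a partition $P_n=B_n\sqcup\{x_i:i\in I_n\}$ with $B_n$ finite and $I_n\subseteq\omega$ finite, the partitions coherent along the chain, so that in $P:=\bigcup_nP_n=Q\sqcup\{x_i:i<\omega\}$, where $Q:=\bigcup_nB_n$, the following hold: (i) $\bigcup_nI_n=\omega$; (ii) $d(x_i,q)\ge r_i$ for all $q\in Q$, and $d(x_i,x_j)\ge\max\{r_i,r_j\}$ for $i\ne j$; (iii) $Q$ is $\omega$-saturated as a rational metric space, hence isometric to $\mathbb{Q}\mathbb{U}$; (iv) for each $i$, $Q\cup\{x_i\}$ — viewed as an object of $\mathbb{Q}\mathcal{M}_{r_i}$ with special point $x:=x_i$ — is $\omega$-saturated in $\mathbb{Q}\mathcal{M}_{r_i}$, hence (isomorphic to) the \Fra\ limit of $\mathbb{Q}\mathcal{M}_{r_i}$; (v) for every $\{x,y\}\in[Q]^2$ there is $i<\omega$ with $d(x,x_i)+d(x,y)=d(x_i,y)$. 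Granting this, I set $\mathbb{U}\oplus_{i<\omega}x_i:=\overline P$. Since the $x_i$ are pairwise more than $1$ apart and at distance at least $r_i>1$ from $Q$, a routine Cauchy-sequence argument gives $\overline P=\overline Q\sqcup\{x_i:i<\omega\}$ with $d(x_i,\overline Q)=r_i$; then (iii) yields $\overline Q\cong\mathbb{U}$, and (iv) together with the recalled fact yields that $\overline Q\cup\{x_i\}$ is an isometric copy of the $r_i$-type (with $\overline Q$ corresponding to $\mathbb{U}$), so $x_i$ realizes the $r_i$-type over $\overline Q$; finally (v) restricted to the dense copy $\mathbb{Q}\mathbb{U}=Q$ and (ii) give the second and third bullets.

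The chain is produced by bookkeeping through four kinds of tasks, organized so that each is eventually attended to: (A) \emph{activation} — introduce a fresh $x_i$; (B) \emph{$Q$-extension} — given finite $E\subseteq B_n$ and a rational one-point extension $E\oplus z$, realize it by a new point of $Q$; (C) \emph{type-extension} — given $i\in I_n$, a rational one-point extension $E\oplus x\subseteq(E\cup\{z\})\oplus x$ in $\mathbb{Q}\mathcal{M}_{r_i}$, and an embedding $E\oplus x\to B_n\cup\{x_i\}$ sending $x$ to $x_i$, realize $z$ by a new point of $Q$; (D) \emph{marking} — given $\{x,y\}\in[B_n]^2$, adjoin a fresh $x_i$ with $d(x,x_i)+d(x,y)=d(x_i,y)$. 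One-point extensions suffice in (B) and (C) by induction on the size of the extension, so (B) yields (iii), (C) yields (iv), (D) yields (v), and (A) — or simply the convention that each (D)-task use a hitherto unused index — yields (i). The only delicate bookkeeping point is that a (C)-task must wait until its domain actually embeds and a (D)-task until both its points have appeared; this is the usual lazy enumeration.

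What makes the four families of demands simultaneously satisfiable at each finite stage, \emph{as long as (ii) is maintained}, is that they are all positive. For (B) and (C) one adds a single new point $z$ to $Q$ with its distances to a finite set $S\subseteq P_n$ prescribed ($S=E$ in (B); $S=E\cup\{x_i\}$ together with any new points already added in this step, in (C)), and completes the metric by the shortest-path recipe $d(z,p):=\min\{d(z,s)+d(s,p):s\in S\}$; this is a rational metric not disturbing $P_n$, and for $p=x_j$ it gives $d(z,x_j)=\min\{d(z,s)+d(s,x_j):s\in S\}>r_j$, because $d(z,s)>0$ and each $s\in S$ satisfies $d(s,x_j)\ge r_j$ — either $s\in B_n\subseteq Q$, or $s=x_i$ (use (ii)), or $s$ is a $Q$-point added earlier in the run (use the inductive maintenance of (ii)). For (A) and (D) one adjoins $x_i$ by setting $d(x_i,p):=c+d(x_0,p)$ for all $p\in P_n$, where $x_0\in B_n$ is a base point ($x_0:=x$ in case (D)) and $c\in\mathbb{Q}$ is any value with $c\ge r_i$; one checks directly that this is a metric, that $d(x_i,q)=c+d(x_0,q)\ge c\ge r_i$ for $q\in B_n$, that $d(x_i,x_j)=c+d(x_0,x_j)\ge\max\{c,r_j\}\ge\max\{r_i,r_j\}$ for already-present $x_j$ since $d(x_0,x_j)\ge r_j$, and that in case (D) $d(x_i,y)=c+d(x,y)=d(x_i,x)+d(x,y)$, as wanted. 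That the \Fra\ limit of $\mathbb{Q}\mathcal{M}_{r_i}$ really satisfies $\inf\{d(x,q)\}=r_i$ — needed so that $d(x_i,\mathbb{U})=r_i$ after completion — follows from (iv) applied to the morphisms $\{x\}\hookrightarrow\{x\}\oplus q$ of $\mathbb{Q}\mathcal{M}_{r_i}$ with $d(x,q)$ a rational tending down to $r_i$.

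The one point that genuinely needs attention — and that I would settle first — is the maintenance of (ii) under (C)-steps: type-extending $x_i$ drags a new $Q$-point $z$ to within distance $r_i$ of $x_i$, which a priori could bring $z$ within distance $r_j$ of some other $x_j$. The computation above shows this is prevented exactly by the strengthened separation $d(x_i,x_j)\ge\max\{r_i,r_j\}$ in (ii), rather than the weaker $>1$ demanded by the lemma; and this strengthening costs nothing, since it is automatic from the activation recipe with $c\ge r_i$ and in any case implies $d(x_i,x_j)>1$. Everything else is routine \Fra-style checking, and passing to $\overline P$ finishes the argument.
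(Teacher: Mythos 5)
Your proof is correct, and it rests on the same three ingredients as the paper's: seeding the $r_i$-type with the collinear rational triple satisfying $d(x_i,p^0)+d(p^0,p^1)=d(x_i,p^1)$, realizing each $r_i$-type as the completion of the \Fra\ limit of $\mathbb{Q}\mathcal{M}_{r_i}$, and separating the special points by shortest-path (push-out) amalgamation, which yields $d(x_i,x_j)\ge r_i+r_j>2$. The difference is in the decomposition. The paper keeps the base copy of $\mathbb{U}$ fixed, builds each complete $r_n$-type $\mathbb{U}\oplus x_n$ in isolation (aligned with the pair $p_n$), and attaches it to $\mathbb{U}\oplus_{i<n}x_i$ by a push-out over $\mathbb{U}$; since the base is never enlarged, the invariant $d(x_j,u)\ge r_j$ for $u\in\mathbb{U}$ is automatic and the only computation needed is $d(x_n,x_i)=\inf_{u}\bigl(d(x_n,u)+d(u,x_i)\bigr)\ge r_n+r_i$. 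You instead run a single interleaved bookkeeping construction at the level of finite rational spaces and complete only at the end; this is more elementary and makes explicit two verifications the paper leaves implicit (that $\overline{Q}\cong\mathbb{U}$ and that each $Q\cup\{x_i\}$ really is the $\mathbb{Q}\mathcal{M}_{r_i}$-limit), but it creates the one extra obligation you correctly isolate and discharge: points added to $Q$ in order to saturate some $x_i$ must stay at distance at least $r_j$ from every other $x_j$, which your strengthened separation invariant together with the shortest-path completion guarantees. Both arguments are sound; yours trades the paper's cleaner separation of concerns for a fully finitary, self-contained verification.
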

\begin{proof}
Fix an enumeration of all pairs $\{x,y\}\in [\mathbb{Q}\mathbb{U}]^2$. We denote it by $\{p_k\}_{k<\omega}$ and use $p_k=\{p_k^0,p_k^1\}$. 

Next we proceed by induction. Assume that we have already constructed $\mathbb{U}\oplus_{i<n}x_i$ with given properties and we want to extend it to a point $x_n$. We describe the first element of a sequence in the category $\mathbb{Q}\mathcal{M}_{r_i}$ 
We denote it by $E\oplus x$. We choose $E=\{p_n^0,p_n^1\}\subseteq \mathbb{Q}\mathbb{U}$ in such a way that we can find a metric on $E\oplus x$ such that $d(x,p_n^0)+d(p_n^0,p_n^1)=d(x,p_n^1)$. Then we take the closure of the \Fra\  limit $\mathbb{U}\oplus x_n$ and embedd it in the space $\mathbb{U}\oplus_{i<n}x_i$ in a proper way i.e. mapping $E$ from the limit to $E\subseteq \mathbb{Q}\mathbb{U}$. Then take the amalgamation given by the push-out (i.e., the maximal amalgamation). We are done, because $d(x_n,x_i)\ge r_n+r_i$.
\end{proof}

\begin{theorem}\label{embedding}
There is an embedding $e:\mathbb{U}\to\mathbb{U}$ such that $G_e=\{id\}$.
\end{theorem}
\begin{proof}
For a fixed sequence $\{r_i\}_{i<\omega}$ with $r_i>1$ take $\mathbb{U}\oplus_{i<\omega}x_i$ as in the previous lemma. We use the argument described in a discussion above to create a dense set of the Urysohn space of the form $\mathbb{Q}\mathbb{U}\oplus_{i<\omega}x_i\oplus_{j<\omega}y_j$, where $y_j$ is a finite type over $\mathbb{Q}\mathbb{U}\oplus_{i<\omega}x_i\oplus_{k<j}y_k$. We claim that after taking $\overline{\mathbb{Q}\mathbb{U}\oplus_{i<\omega}x_i\oplus_{j<\omega}y_j}\simeq \mathbb{U}$ the only $r$-types over the original $\mathbb{U}$ are realized by $\{x_i\}_{i<\omega}$. Assume that there is $z$ which realizes an $r$-type and $d(z,\mathbb{U}\oplus_{i\in\mathbb{N}}x_i)>\epsilon$. So there is a point $y_k$  which realizes a finitely supported type over $\mathbb{U}\oplus_{i\in\mathbb{N}}x_i\oplus_{j<k}y_j$ such that $d(y_k,z)<\frac{\epsilon}{4}$ and this is impossible due to the previous lemma, because
$$\frac{\epsilon}{4}>d(y_k,z)\ge d(y_k,\mathbb{U}\oplus_{i\in\mathbb{N}}x_i)\ge d(z,\mathbb{U}\oplus_{i\in\mathbb{N}}x_i)-d(y_k,z)=\epsilon -\frac{\epsilon}{4}.$$

Suppose that we have an automorphism $\alpha\in \Aut(\mathbb{U})$ and we can extend it by some $\beta\in \Aut(\mathbb{U})$ via $e$. Because for every $r_i$ there is exactly one $r_i$-type in the extended $\mathbb{U}$, we have that $\beta(x_i)=x_i$. Assume that for some $z\in \mathbb{U}$ we have $\alpha(z)=y\not=z$. Due to the construction of $\{x_i\}_{i<\omega}$, there are always $k<\omega$ and points $z',y'\in\mathbb{Q}\mathbb{U}$ close enough to $y,z$ such that $d(x_k,z')=d(z',y')+d(x_k,y')$ contradicting $d(x_k,z)=d(\beta(x_k),\beta(z))=d(x_k,y)$ which should hold because $\beta$ is an isometry.
\end{proof}

We need a generalization of the previous lemma to prove the main theorem. In fact, we need to generalize it for the situation when we iterate by adding arbitrary and finitely supported types together. To be more concrete, let us describe such a situation. Assume that we have a finite extension of a space $X_0$ by some points with finite support over $X_0$, i.e., we set $Y_0:=X_0\oplus_{i<n_1}y_{0,i}$. Then we add finitely many points with arbitrary support over $Y_0$, i.e., $X_1:=Y_0\oplus_{i<k_1}x_{0,i}$. We repeat this procedure $m$ times obtaining
$$X:=X_0\oplus_{j<m,i<k_j}x_{j,i}\oplus_{p<m,q<n_p}y_{p,q}.$$
For every one point extension $X\oplus y$ with finite support we may define $s^*_y(X)$ which is a subset of $X_0\oplus_{j<m,i<k_j}x_{j,i}$ defined by the following: $z\in s^*_y(X)$ iff there exists a sequence of pairs of numbers $\{p_i,q_i\}_{i<m'}$ strictly decreasing in the first coordinate and such that $y_{p_{i+1},q_{i+1}}$ is in the support of $y_{p_{i},q_{i}}$ (where by \emph{support} we mean its finite support over $X_{p_{i}}$), $y_{p_0,q_0}$ is in the support of $y$, and $z$ is in the support of $y_{p_{m'-1},q_{m'-1}}$.

Similarly as in the previous case, we may define a function $d^*_{y}:s^*_y(X)\to \mathbb{R}$, where $d^*_{y}(z)$ is the minimal sum of distances over all sequences going from $y$ to $z$ as described above. This is a generalization of the previous definition, because we can calculate distances only for $x\in X_0$ by 
$$d(y,x)=\min\{d_{y}(z)+d(z,x):z\in s^*_{y}(X)\}$$
which can be verified similarly as before. 

\begin{lemma}
Consider a space $X:=(\mathbb{U}\oplus_{j<m,i<k_j}x_{j,i}\oplus_{p<m,q<n_p}y_{p,q})\oplus y$ described in the previous paragraph. Then for every point $z$ realizing an $r$-type over $\mathbb{U}$ we have that $d(y,z)\ge d(y,\mathbb{U}\oplus_{j<m,i<k_j}x_{j,i})$. In particular, $y$ does not realize any $r$-type over $\mathbb{U}$.
\end{lemma}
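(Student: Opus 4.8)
The plan is to reduce this statement to the previously established single-step lemma (the Lemma about $\mathbb{U}\oplus_{i\in\mathbb{N}} x_i \oplus y$ with finite support) by showing that $y$, although defined through an alternating tower of arbitrary-support points $x_{j,i}$ and finitely-supported points $y_{p,q}$, actually realizes a \emph{finitely supported type directly over} $\mathbb{U}\oplus_{j<m,i<k_j}x_{j,i}$. The key is the set $s^*_y(X)$ and the function $d^*_y$ introduced in the paragraph just above the statement: the defining formula
$$d(y,x)=\min\{d^*_{y}(z)+d(z,x):z\in s^*_{y}(X)\}\qquad (x\in X_0=\mathbb{U})$$
exhibits $y$ as a finitely supported type over the finite set $s^*_y(X)\subseteq \mathbb{U}\oplus_{j<m,i<k_j}x_{j,i}$, after possibly passing to the minimal sub-support (the analogue of $\mathit{supp}_{y_n}$ from the earlier discussion) on which equality $d(y,z)=d^*_y(z)$ holds. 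So the first step is to verify this: unwinding the iterated-support definitions, every distance from $y$ to a point of $\mathbb{U}$ is computed by concatenating support-witnessing chains, and telescoping the triangle (in)equalities collapses each such chain to a single jump through some $z\in s^*_y(X)$; one then takes $Y:=\mathit{supp}^*_y(X)$, a finite subset of $\mathbb{U}\oplus_{j<m,i<k_j}x_{j,i}$, with $d(y,z')=\inf\{d(y,z)+d(z,z'):z\in Y\}$ for all $z'$ in that bigger space.

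Once $y$ is seen to be finitely supported over $\mathbb{U}\oplus_{j<m,i<k_j}x_{j,i}$, I would like to simply invoke the earlier Lemma with the role of "$\mathbb{U}\oplus_{i\in\mathbb{N}} x_i$" played by "$\mathbb{U}\oplus_{j<m,i<k_j}x_{j,i}$"; but the earlier Lemma is stated for a countable family $\{x_i\}_{i\in\mathbb{N}}$ enlarging $\mathbb{U}$ to an arbitrary separable space, and here we have only finitely many $x_{j,i}$ and additionally want the conclusion phrased relative to $\mathbb{U}$ rather than relative to the enlarged space. The proof of the earlier Lemma, however, only used finitely many support points $\{a_i\}_{i\le n}$ of $y$ and a single large constant $L>\max_i d(y,a_i)$; it builds the obstructing finite space $E\oplus w$ with $n{+}2$ points at mutual distance $2L$ and distance $L$ to $w$, and derives a contradiction from the pigeonhole principle applied to which support point $a_k$ realizes the jump. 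That argument goes through verbatim with the $a_i$ now ranging over $\mathit{supp}^*_y(X)\subseteq \mathbb{U}\oplus_{j<m,i<k_j}x_{j,i}$, because a realization $f:E\oplus w\to \mathbb{U}\oplus y$ with $f(w)=y$ forces two of the $f(w_i)$ to route their distance to $y$ through the same $a_k$, yielding $2L=d(f(w_i),f(w_j))\le 2L-2d(y,a_k)<2L$. Thus $y$ realizes no $r$-type over $\mathbb{U}$ for any $r$, and the same $L$-free version of the computation (the displayed triangle-inequality chain in the discussion preceding the earlier quantitative Lemma) gives, for any $z$ realizing an $r$-type, $d(z,y)\ge d(y,a_k)$ for the relevant witness $a_k$, hence $d(z,y)\ge \min_{a\in \mathit{supp}^*_y(X)} d(y,a)=d(y,\mathbb{U}\oplus_{j<m,i<k_j}x_{j,i})$.

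Putting the two steps together yields both assertions: the inequality $d(y,z)\ge d(y,\mathbb{U}\oplus_{j<m,i<k_j}x_{j,i})$ for every $r$-type realization $z$, and, as a special case (letting the support constant grow without bound exactly as in the first quantitative Lemma), that $y$ cannot itself be an $r$-type over $\mathbb{U}$. The main obstacle I anticipate is the bookkeeping in the first step: one must check carefully that the alternation of arbitrary-support layers $x_{j,i}$ and finite-support layers $y_{p,q}$ does not let a distance from $y$ to $\mathbb{U}$ "escape" through a point with arbitrary (non-finite) support without being mediated by some $x_{j,i}$ — but this is exactly what the strictly-decreasing-first-coordinate condition in the definition of $s^*_y(X)$ enforces, since each descent through a $y_{p,q}$ lands in $X_p$ whose "new" points are precisely the arbitrary-support $x_{p',i}$ with $p'<p$ together with $\mathbb{U}$, so the chain terminates either in $\mathbb{U}$ or, as recorded, in the support of some bottom-level $y_{p_{m'-1},q_{m'-1}}$, which lies in $X_0\oplus_{j<m,i<k_j}x_{j,i}$. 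Once this structural fact is in hand, the rest is a transcription of the earlier proof.
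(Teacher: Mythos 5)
Your proposal is correct and follows essentially the same route as the paper: reduce to the observation that $y$ is finitely supported over $\mathbb{U}\oplus_{j<m,i<k_j}x_{j,i}$ via $s^*_y(X)$ and $d^*_y$, then rerun the pigeonhole/triangle-inequality argument of the earlier finitely-supported lemma with the support points now allowed to lie outside $\mathbb{U}$. The paper states this reduction in two sentences; you have merely spelled out the details it leaves implicit.
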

\begin{proof}
We only need to observe that the type of $y$ over $\mathbb{U}$ is in fact finitely supported, however its support $s^*_y(X)$ may not be contained in $\mathbb{U}$. But the proof of the previous cases requires only points from $\mathbb{U}$ to obtain the inequality so we may use it as well to obtain
$$d(y,z)\ge \min\{d^*_y(z),z\in s^*_y(X)\}\ge d(y,\mathbb{U}\oplus_{j<m,i<k_j}x_{j,i}).$$
\end{proof}

\begin{theorem}
There is a rigid Urysohn-like space of density $\aleph_1$.
\end{theorem}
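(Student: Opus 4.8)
The plan is to iterate, transfinitely along $\omega_1$, the construction of the previous theorem (the embedding $e\colon\mathbb{U}\to\mathbb{U}$ with $G_e=\{id\}$), in exactly the way the rigid Rado-like graph was obtained from the embedding $e\colon\mathcal R\to\mathcal R$ with $G_e=\{id\}$, with an almost disjoint family playing the role it played there.

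Concretely, I would fix an almost disjoint family $\{A_\alpha:0<\alpha<\omega_1\}$ of infinite subsets of $(1,\infty)\cap\mathbb Q$, each with its increasing enumeration $A_\alpha=\{r_{\alpha,0}<r_{\alpha,1}<\cdots\}$, and build by recursion a continuous chain $(\mathbb{U}_\alpha,e_\alpha)_{\alpha<\omega_1}$ with each $\mathbb{U}_\alpha\simeq\mathbb{U}$, together with an increasing sequence of countable dense sets $D_\alpha\subseteq\mathbb{U}_\alpha$. Put $\mathbb{U}_0=\mathbb{U}$. At a limit $\lambda$ set $\mathbb{U}_\lambda:=\overline{\bigcup_{\alpha<\lambda}\mathbb{U}_\alpha}$; this is again a copy of $\mathbb{U}$, since a completed increasing union of Urysohn spaces is a complete separable space with the approximate one-point extension property, and for complete spaces this is equivalent to (E) --- the closing-off argument behind the Proposition above. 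At a successor stage I would pass from $\mathbb{U}_\alpha$ to $\mathbb{U}_{\alpha+1}$ in two moves, following the last three lemmas: first adjoin a \emph{block} $\{x_{\alpha+1,k}\}_{k<\omega}$ over $\mathbb{U}_\alpha$ with $x_{\alpha+1,k}$ realizing the $r_{\alpha+1,k}$-type over $\mathbb{U}_\alpha$, with $d(x_{\alpha+1,k},x_{\alpha+1,l})>1$ for $k\ne l$, and such that every pair $\{p,q\}\in[D_\alpha]^2$ admits infinitely many $k$ with $d(x_{\alpha+1,k},p)+d(p,q)=d(x_{\alpha+1,k},q)$ (the obvious generalisation of the three-bullet lemma, with $\mathbb Q\mathbb{U}$ replaced by $D_\alpha$ and the amalgamations taken as push-outs); then adjoin a sequence of finitely supported points over the space built so far, running through all finite-support types, and let $\mathbb{U}_{\alpha+1}$ be the completion. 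By the last (generalised) lemma, together with the density of the finitely supported points and its quantitative bound, no point of $\mathbb{U}_{\alpha+1}$ except the block points realizes an $r$-type over $\mathbb{U}_\beta$ for any $\beta\le\alpha$. Let $X$ be the colimit of this chain.

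By construction $X$ is Urysohn-like, and of density exactly $\omega_1$ because $d(x_{\alpha+1,k},\mathbb{U}_\alpha)=r_{\alpha+1,k}>1$ keeps each $\mathbb{U}_\alpha$ non-dense. To get rigidity it suffices, by the tree analysis of Section~2, to prove $G_{e_{j,i}}=\{id\}$ for all $j<i<\omega_1$, since then no non-trivial automorphism of any $\mathbb{U}_j$ can lie on a cofinal branch of $T_X$. So assume $\alpha\in\Aut(\mathbb{U}_j)$, $\beta\in\Aut(\mathbb{U}_i)$ and $\beta\circ e_{j,i}=e_{j,i}\circ\alpha$. Then $\beta$ fixes $e_{j,i}[\mathbb{U}_j]$ setwise, so by the observation that $\beta$ maps $r$-types over $e_{j,i}[\mathbb{U}_j]$ to $r$-types over $e_{j,i}[\mathbb{U}_j]$, it permutes, for each $r$, the realizations of the $r$-type over $e_{j,i}[\mathbb{U}_j]$. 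The crux --- this is what the bookkeeping above is for --- is that for $r\in A_{j+1}$ the block point $x_{j+1,k}$ (with $r=r_{j+1,k}$) is the unique realization of the $r$-type over $\mathbb{U}_j$ in all of $\mathbb{U}_i$: finitely supported points and their limits are excluded by the generalised lemma, points of $\mathbb{U}_j$ by $d(\cdot,\mathbb{U}_j)=0$, and later blocks by the construction. Granting this, $\beta$ fixes every $x_{j+1,k}$. If now $\alpha\ne id$, it moves some point of the dense set $D_j$, say $\alpha(p)=p'\ne p$; choosing $q\in D_j$ with $d(q,p')$ much smaller than $d(p,p')$ and applying the fingerprint property to $\{p,q\}$ gives a $k$ with $d(x_{j+1,k},q)=d(x_{j+1,k},p)+d(p,q)$, whereas $d(x_{j+1,k},p')=d(\beta x_{j+1,k},\beta p)=d(x_{j+1,k},p)$ because $\beta$ fixes $x_{j+1,k}$, so $|d(x_{j+1,k},q)-d(x_{j+1,k},p)|\le d(q,p')$. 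This contradicts $d(p,q)\ge d(p,p')-d(p',q)>0$, hence $\alpha=id$.

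I expect the main obstacle to be the uniqueness claim in the last paragraph: controlling, from the vantage point of a low level $\mathbb{U}_j$, exactly which points of the large space $\mathbb{U}_i$ realize $r$-types for $r\in A_{j+1}$. The finitely supported points and their limits are handled cleanly by the last stated lemma, whose whole purpose is that iterated finite support cannot hide an $r$-type even when the support drifts through intervening blocks; the delicate case is a later block $\gamma>j+1$, for which one must ensure, by the choice of amalgamations together with the fingerprint data recorded over $D_{\gamma-1}$, that its points are not mistaken over $\mathbb{U}_j$ for block points at level $j+1$. This bookkeeping is precisely the additional technical complication over the graph case mentioned in the abstract.
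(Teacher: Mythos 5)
Your architecture is the paper's: build a continuous chain of copies of $\mathbb{U}$, at each successor stage adjoin a block of points realizing prescribed $r$-types that ``fingerprint'' the pairs of a countable dense set, fill out with finitely supported types so that (by the iterated-finite-support lemma) no new $r$-type realizations appear in the completion, and then derive rigidity from the uniqueness of each $r$-type realization plus the separation of pairs. However, there is a genuine gap at exactly the point you flag as ``the main obstacle'' and then do not resolve: the exclusion of \emph{later blocks} as competing realizations. Your bookkeeping imports the graph construction's almost disjoint family, taking the $A_\alpha$ to be subsets of $(1,\infty)\cap\mathbb{Q}$. Since this index set is countable, the $A_\alpha$ cannot be pairwise disjoint, and almost disjointness does not prevent a single value $r$ from occurring in uncountably many $A_\gamma$. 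In the graph case almost disjointness suffices because the new block is \emph{connected}, so any extension $\beta$ must map a whole block onto a whole block, and then the entire degree sequence $A_{j+1}$ is compared with the entire sequence $A_{k+1}$; your metric argument has no analogue of this block-level rigidity --- it identifies block points one at a time by the single value $r$ they realize. Hence a point $x_{\gamma,k}$ of a later block with $r_{\gamma,k}=r_{j+1,k'}\in A_\gamma\cap A_{j+1}$ is a potential second realization of the $r_{j+1,k'}$-type over $\mathbb{U}_j$, and nothing in your construction rules it out; ``later blocks by the construction'' is asserted, not proved.

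The paper closes this gap with two devices you are missing. First, it takes the matrix $\{r_{\alpha,j}\}_{\alpha<\omega_1,\,j<\omega}$ to consist of \emph{pairwise different real} numbers --- available precisely because $(1,\infty)$ is uncountable, so no almost disjoint family is needed; this is where the metric case genuinely diverges from the graph case rather than mirroring it. Second, pairwise distinctness alone is not enough, because a later block point $x_{\gamma,k}$ might a priori realize over the small space $\mathbb{U}_j$ an $r$-type for some \emph{other} value $r\neq r_{\gamma,k}$ (one only knows $d(x_{\gamma,k},\mathbb{U}_j)\ge r_{\gamma,k}$, and the distance to the smaller subspace could be larger). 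The paper therefore arranges, via an explicit four-point amalgamation over a witness $w\in X_0$, that $d(x_{\gamma,k},X_0)=r_{\gamma,k}$, hence $d(x_{\gamma,k},X_\beta)=r_{\gamma,k}$ for every $\beta$; so if $x_{\gamma,k}$ realizes any $r$-type over $X_\beta$ at all, then $r=r_{\gamma,k}$, and pairwise distinctness finishes the uniqueness claim. You would need to replace your almost disjoint family of rationals by pairwise distinct reals and add this ``anchor to $X_0$'' step; with those two changes the rest of your argument (the club/tree reduction, the Observation that extensions permute $r$-type realizations, and the fingerprint contradiction) goes through as in the paper.
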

\begin{proof}

We proceed  by induction of length $\omega_1$. Fix a matrix $\{r_{\alpha,j}\}_{\alpha<\omega_1,j<\omega}$ of pairwise different real numbers, where $r_{\alpha,j}>1$ such that $r_{\alpha,j}\to\infty$ as $j\to \infty$. Next we define a transfinite sequence of spaces and their countable dense parts as follows. Put $X_0:=\mathbb{U}$ and $X^{'}_0:=\mathbb{Q}\mathbb{U}$. For $\alpha$ limit let $X_{\alpha}:=\overline{\bigcup_{\beta<\alpha}X_{\beta}}$ and $X^{'}_{\alpha}:=\bigcup_{\beta<\alpha}X^{'}_{\beta}$. Assume that we have already constructed $\{X_{\alpha}\}_{\beta\le \alpha}$ and that $\mathbb{U}\simeq X_{\beta}$ for all $\beta\le\alpha$. Given $\alpha+1$, first enumerate all tuples of $X'_{\alpha}$ by $\{p_j\}_{j<\omega}$ in such a way that $d(p_j^0,p_j^1)\le r_{\alpha,j}$. This is possible since $r_{\alpha,j}\to\infty$ as $j\to\infty$. Put $X_{\alpha}\oplus_{j<\omega}x_{\alpha,j}$, where $x_{\alpha,j}$ realizes an $r_{\alpha,j}$-type over $X_{\alpha}$ and separate a tuple $p_j$ as in the proof of Theorem~\ref{embedding}.

Moreover this can be done in such a way that for each $x_{\alpha,j}$ there is $y\in X_0$ with $d(y,x)=r_{\alpha,j}$. We use homogeneity of $\mathbb{U}$ here. We may always find a triangle $\{w,p_j^0,p_j^1\}\subseteq X_{\alpha}$ such that $w\in X_{0}$ and $d(w,p_j^0)\ge d(w,p_j^1)>r_{\alpha,j}$. Let us define the four-point metric space $\{z,q_j^0,q_j^1,x_{\alpha,j}\}$ by the following conditions.
\begin{itemize}
\item $d(z,q_j^l)=d(w,p_j^l)$,  $d(q_j^0,q_j^1)=d(p_j^0,p_j^1)$,
\item $d(x_{\alpha,j},z)=r_{\alpha,j}$,  $d(x_{\alpha,j},q_j^0)=d(z,q_j^1)$ and $d(x_{\alpha,j},q_j^1)=d(z,q_j^1)+d(q_j^1,q_j^1)$.
\end{itemize}
One can easily verify that this is indeed a metric space. Now once we have $\mathbb{U}\oplus x_{\alpha,j}$ and $x_{\alpha,j}$ realizes an $r_{\alpha,j}$-type then it must contain our just-defined four-point space. By the homogeneity of $\mathbb{U}$ we may assume that $z=w$ and $q_j^l=p_j^l$. Finally, fill-out the space  $X^{'}_{\alpha}\oplus_{j<\omega}x_{\alpha,j}$ by finitely supported types $\{y_i\}_{i<\omega}$ as in the proof of Theorem~\ref{embedding}, in order to obtain $X_{\alpha+1}:=\overline{X_{\alpha}\oplus_{j<\omega}x_{\alpha,j}\oplus_{i<\omega}y_i}\simeq\mathbb{U}$ and define $X^{'}_{\alpha+1}:=X^{'}_{\alpha}\oplus_{j<\omega}x_{\alpha,j}\oplus_{i<\omega}y_i$. Finally, let $X:=\bigcup_{\alpha<\omega_1}X_{\alpha}$.

Fix $\beta<\omega_1$.
We claim that all $r$- types over $X_\beta$ are in the set
$$Y:=X_\beta\oplus_{\beta\le\alpha<\omega_1,j<\omega}x_{\alpha,j}.$$
Suppose that there is $z\in X$ which realizes an $r$-type over $X_\beta$ and $d(z,Y)>\epsilon$. There must be $\alpha<\omega_1$ such that $z\in X_{\alpha}$. We find $t\in X^{'}_{\alpha}$ with $d(t,z)<\frac{\epsilon}{4}$ which has a finite support over some $X^{'}_{\gamma}\oplus_{j<\omega}x_{\gamma,j}\oplus_{l<n}y_l$ where $\gamma<\alpha$. If we consider $s^*_t(X^{'}_\beta\oplus_{\beta\le\zeta\le \gamma,j<\omega}x_{\zeta,j})$ then we suddenly are in the same situation as in one of the previous lemmas, because $|s^*_t(X^{'}_\beta\oplus_{\beta\le\zeta\le \gamma,j<\omega}x_{\zeta,j})|<\omega$. Thus we obtain again a contradiction:
$$\frac{\epsilon}{4}>d(t,z)\ge d(t,X_\beta\oplus_{\beta\le\zeta \le \gamma,j<\omega}x_{\zeta,j})\ge d(z,X_\beta\oplus_{\beta\le\zeta \le \gamma,j<\omega}x_{\zeta,j})-d(t,z)\ge\epsilon -\frac{\epsilon}{4}.$$

The matrix $\{r_{\alpha<\omega_1,j<\omega}\}$ contains pairwise different real numbers and every automorphism $f:X\to X$ must be invariant on club many $X_\beta$.
More precisely, there is a closed unbounded set $C \subseteq \omega_1$ such that for $\beta \in C$ the restriction of $f$ to $X_\beta$ is an automorphism of $X_\beta$. Fix $\beta \in C$. We have $f(x_{\beta,j})=x_{\beta,j}$ for all $j<\omega$, because $x_{\beta,j}$ realizes $r_{\beta,j}$-type and it cannot be moved since it is the only one in $X$. The reason is that we have for each $x_{\alpha,j}$ some point $y\in X_0$ such that $d(x_{\alpha,j},y)=r_{\alpha,j}$ due to the construction, so if among the set $\{x_{\alpha,j}\}_{\beta<\alpha<\omega_1,j<\omega}$ there is some $r$-type over $X_{\beta}$ then it must respect its value $r_{\alpha,j}$, because we have $d(x_{\alpha,j},X_{\beta})=r_{\alpha,j}$. Due to similar construction of $X_{\beta+1}$ as in Theorem~\ref{embedding}, $f$ has to be identity on $X_\beta$, therefore it has to be identity on $X$.
\end{proof}

\section{Final remarks}

Let us mention here what can be said about the universality of objects that we have constructed. This was kindly suggested by the referee. The concrete question may be as follows: Is there a rigid Urysohn-like metric space of density $\aleph_1$ which is universal for all metric spaces of density $\aleph_1$? The answer may be positive if there exists a universal metric space of density $\aleph_1$ but it is known that such a space may not exist (for example in the Cohen model). On the other hand, it is not known whether it is consistent that such a space exists under the failure of CH (this is known in case of graphs, see~\cite{shelah}). Every metric space of density $\aleph_1$ can be embedded in some Urysohn-like space of density $\aleph_1$, this can be achieved simply by using the \Kat\ functor. So the best that one can get is the following conjecture.

\begin{conjecture}
For every metric space $M$ of density $\aleph_1$ there exists some rigid Urysohn-like space $X$ of density $\aleph_1$ such that $M$ can be embedded in $X$. 
\end{conjecture}

In another words the conjecture states that the class of all rigid Urysohn-like spaces of density $\aleph_1$ is universal for all metric spaces of density $\aleph_1$. The conjecture implies that if there exists a universal metric space of density $\aleph_1$ then there exists one which is rigid Urysohn-like.

Another interesting problem is to get a similar result in the class of finite dimensional Banach spaces $\mathcal{B}$ where embeddings are linear isometric monomorphisms.
The \Fra\ limit is so called \emph{\Gur\ space} $\mathbb{G}$.
It is characterised as the unique separable Banach space such that $Age(\mathbb{G})=\mathcal{B}$ and it has the approximate extension property with respect to $\mathcal{B}$ (where the distance between two maps is the norm of their difference).

\begin{question}
Is there an embedding $e:\mathbb{G}\to\mathbb{G}$ such that $G_e=\{id,-id\}$? Is there a rigid \Gur -like Banach space?
\end{question}

The class $\sigma\mathcal{B}$ of separable Banach spaces admits a \Kat\ functor which was proved in~\cite{Yaacov1}. 
The strategy to answer the question may be the same as in the case of metric spaces i.e. find some special one point extensions that cannot be approximated by finitely supported one point extensions and then use the \Kat\ functor.

\bibliographystyle{amsplain}

\end{document}